\newcommand\rank{{\operatorname{rank}}}
\newcommand\corank{{\operatorname{corank}}}
\newcommand\R{{\mathbb{R}}}
\newcommand\C{{\mathbb{C}}}
\renewcommand\P{{\mathbb{P}}}
\newcommand\E{{\mathbb{E}}}
\newcommand\Z{{\mathbb{Z}}}
\newcommand\F{{\mathbb{F}}}
\newcommand\Bv{{\mathbf v}}
\newcommand\Bw{{\mathbf w}}
\newcommand\CE{{\mathcal E}}
\newcommand\supp{\mathbf{supp}}
\newcommand\eps{\varepsilon}
\renewcommand\a{\alpha}
\newcommand\Aut{\mathbf{Aut}}
\theoremstyle{plain}
  \newtheorem{theorem}[subsection]{Theorem}
  \newtheorem{proposition}[subsection]{Proposition}
  \newtheorem{fact}[subsection]{Fact}
  \newtheorem{lemma}[subsection]{Lemma}
  \newtheorem{corollary}[subsection]{Corollary}
\theoremstyle{definition}
  \newtheorem{definition}[subsection]{Definition}
\renewcommand*\env@matrix[1][*\c@MaxMatrixCols c]{%
  \hskip -\arraycolsep
  \let\@ifnextchar\new@ifnextchar
  \array{#1}}
\begin{document}

\title{The Rank of the Sandpile Group of Random Directed Bipartite Graphs}
\author{Atal Bhargava, Jack Depascale, Jake Koenig}


\begin{abstract} 
We identify the asymptotic distribution of $p$-rank of the sandpile group of random directed bipartite graphs which are not too imbalanced. We show this matches exactly that of the Erd{\"o}s-R{\'e}nyi random directed graph model, suggesting the Sylow $p$-subgroups of this model may also be Cohen-Lenstra distributed. Our work builds on results of Koplewitz who studied $p$-rank distributions for unbalanced random bipartite graphs, and showed that for sufficiently unbalanced graphs, the distribution of $p$-rank differs from the Cohen-Lenstra distribution. Koplewitz \cite{K} conjectured that for random balanced bipartite graphs, the expected value of $p$-rank is $O(1)$ for any $p$. This work proves his conjecture and gives the exact distribution for the subclass of directed graphs.
\end{abstract}
\maketitle
\section{Introduction} 
Given a directed  graph $G$, we can associate with it an Abelian group $\Gamma(G)$ defined as 
\[
\Gamma(G) = \Z_0^n/M\Z^n
\]
where $M$ is the Laplacian matrix of $G$ and $\Z_0^n$ is the subspace of $\Z^n$ orthogonal to the all $1$'s vector i.e. those vectors with zero sum. This group is known as the sandpile group, as well as the the Jacobian, or critical group of $G$.

A well studied question about the sandpile group is its asymptotic structure for random graphs. The sandpile group of the Erd\H{o}s-R\'enyi random graph is well understood. In \cite{W0}, Wood showed for $G \in G(n,q)$ an Erd\H{o}s-R\'enyi random graph, the $p$-parts of $\Gamma(G)$ are independent for a finite set of $p$ and are isomorphic to a given $p$-group $H$ with probability proportional to,
\begin{align}\label{symCL}
    \frac{\#\{\text{symmetric, bilinear, perfect } \phi:H\times H\to \C^*\}}{|\Aut(H)|}.
\end{align}
This distribution is related to the Cohen-Lenstra distribution which samples a given $p$-group with probability inversely proportional to the size of its automorphism group. Explicitly a random $p$-group $G$ is Cohen-Lenstra distributed if it's probability of being equal to a fixed $p$-group $H$ is given by,
\begin{align*}
    \P(G\simeq H) = \frac{1}{|\Aut H|}\prod_{i=1}^\infty (1-1/p^{i}).
\end{align*}
The distribution was first observed by Cohen and Lenstra as the empirical distribution of the class group of real quadratic number fields. It also appears as the distribution of the cokernel of random matrices of independent entries. This was shown first for Haar measure by Friedman and Washington \cite{FW} and later for a more general class of distributions by Maples \cite{M2} and Wood \cite{W1}. We will refer to the distribution given by Equation~\ref{symCL} as the symmetric Cohen-Lenstra Distribution.

Another important class of random graphs is those with vertices of a specified fixed degree.  M\'{e}sz\'{a}ros \cite{Mes} showed for $d\geq 3$ random $d$-regular directed graphs have sandpile groups with Cohen-Lenstra distribution. Undirected $d$-regular graph's sandpile groups have the same distribution as in Equation~\ref{symCL} for $d$ odd. The distribution for even $d$ is different but related. See M\'{e}sz\'{a}ros' paper for details.

Though ubiquitous, not all random groups are Cohen-Lenstra distributed. In \cite{K}, Koplewitz shows that the distribution of $p$-rank of sandpile groups of random bipartite graphs on vertex sets $V_1,V_2$ depends on the ratio of the vertex sets $\a = \frac{|V_1|}{|V_2|}\leq 1$. He showed that for $\a < 1/p$ the expected $p$-rank is proportional to $n$ and at the critical threshold $\a = 1/p$ the $p$-rank is proportional to $\sqrt{n}$. So instead of the Cohen-Lenstra distribution one doesn't get a limiting distribution at all.

For more balanced bipartite graphs where $1>\a>1/p$, Koplewitz showed the expected $p$-rank is $O(1)$ where the constant depends on $p$. Koplewitz's method doesn't extend to the balanced case $\a=1$ but one can infer from the result that the expected $p$-rank is $o(n)$ in that case. Koplewitz conjectured in the balanced case the expected $p$-rank is $O(1)$. While the work of Koplewitz applies to the more challenging case of undirected graphs, his methods generalize easily to the directed case as well. In this work, we are able to achieve Koplewitz's conjectured bound and find the exact limiting distribution of $p$-rank in the case of directed bipartite graphs with $\alpha>1/p$.

The result of this paper improves Koplewitz's result in two ways: it applies to the balanced case ($\alpha = 1$) and it gives the limiting $p$-rank distribution, not only its expectation.

Our method and result are similar in spirit to other studies of matrices with fully iid entries over finite fields (\cite{M1, M2,NgP,W1}). For instance, it is known (see \cite{M2,W1}) that under mild min-entropy conditions, for a random $n\times (n+u)$ matrix $A$ with iid entries over $\F_p$, we have
\[
\P(\corank(A)=k) =  \frac{1}{p^{k(u+k)}} \frac{\prod_{i=k+1}^\infty (1-1/p^i)}{ \prod_{i=1}^{k+u}(1-1/p^i)} + \exp(-Cn)
\]
We achieve essentially the same result for the matrix model of the Laplacian of a not too imbalanced random directed bipartite graph, which we describe below.

Let $1\geq \alpha>1/p$. Consider a random directed bipartite graph on sets of $n$ and $\lfloor\alpha n\rfloor$ vertices (in the future we will omit the floors when they make no essential difference) given by including each crossing edge independently with probability $0<q<1$, a constant independent of $n$. Denote the Laplacian of this graph by $M$. Note the rows of this matrix are independent (in the probabilistic sense).

\begin{theorem}\label{theorem:main} Let $p$ be a prime such that $p\ll \sqrt{n}$. Then for any $k\le (1+\alpha)n$ we have
$$\P(\corank(M/p)=1+k) =  \frac{1}{p^{k^2+k}} \frac{\prod_{i=k+2}^\infty (1-1/p^i)}{ \prod_{i=1}^{k}(1-1/p^i)} + \exp(-Cn/p^2).$$
\end{theorem}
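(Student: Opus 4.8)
The plan is to compute, for $1\le r\le R$ with $R$ a slowly growing cutoff of order $\sqrt n/(p\sqrt{\log p})$, the $p$-rank moments
\[
E_r \ :=\ \E\bigl[\#\Sur(\ker(M/p),\F_p^r)\bigr]\ =\ \E\Bigl[\textstyle\prod_{i=0}^{r-1}\bigl(p^{\corank(M/p)}-p^i\bigr)\Bigr],
\]
to show $E_r = p^r+\exp(-Cn/p^2)$, and then to invoke the standard fact that a distribution on $\Z_{\ge 0}$ whose $r$-th such moment grows no faster than $p^r$ is uniquely determined by these moments; the unique distribution with $E_r\equiv p^r$ is $1+(\text{the corank of a square-plus-one-column i.i.d.\ matrix over }\F_p)$, which is precisely the displayed formula. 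The ``$1+k$'' is forced at the outset: every row of the Laplacian has vanishing coordinate sum, so each row of $M/p$ lies in the hyperplane $\mathbf 1^\perp\subseteq\F_p^N$ with $N=n+\lfloor\alpha n\rfloor$, whence $\mathbf 1\in\ker(M/p)$ and $\corank(M/p)\ge 1$. The guiding heuristic is that, beyond this one forced relation, the rows of $M/p$ behave like i.i.d.\ uniform vectors on $\mathbf 1^\perp$; for that idealized model a Gaussian-binomial computation gives $\sum_{W\in\F_p^{N\times r},\ \rk W=r} p^{-N\cdot\rho(W)} = p^r+o(1)$, where $\rho(W)=\rk W-\indicator{\mathbf 1\in\Sp(\text{columns of }W)}$, matching the target.

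\textbf{Moment formula and anti-concentration.} Writing an $r$-tuple of kernel vectors as the columns of $W=\binom XY$ with $X\in\F_p^{n\times r}$ (rows $\xi_i$), $Y\in\F_p^{m\times r}$ (rows $\eta_j$), the bipartite structure makes the kernel equations split over vertices:
\[
\E\bigl[\#\{(w_1,\dots,w_r)\in\ker(M/p)^r\}\bigr]=\sum_{X,Y}\ \prod_{i=1}^n\rho\bigl((\xi_i-\eta_j)_{j\le m}\bigr)\ \prod_{j=1}^m\rho\bigl((\eta_j-\xi_i)_{i\le n}\bigr),
\]
where $\rho(a_1,\dots,a_t):=\P\bigl(\sum_\ell B_\ell a_\ell=0\text{ in }\F_p^r\bigr)$ for i.i.d.\ Bernoulli$(q)$ variables $B_\ell$; restricting to $\rk W=r$ and applying inclusion--exclusion yields $E_r$. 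The analytic engine is the Fourier identity $\rho(a_1,\dots,a_t)=p^{-r}\sum_{\theta\in\F_p^r}\prod_\ell(1-q+q\,\zeta^{\theta\cdot a_\ell})$ with $\zeta=e^{2\pi i/p}$: since $|1-q+q\zeta^{x}|^2\le 1-c\,q(1-q)/p^2$ for $x\ne 0$, whenever every linear hyperplane meeting $\Sp\{a_\ell\}$ in a proper subspace omits at least $s$ of the $a_\ell$ one gets $\rho(a_1,\dots,a_t)=p^{-\dim\Sp\{a_\ell\}}+O\bigl(\exp(-c\,s/p^2)\bigr)$.

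\textbf{Spread versus sparse; the main obstacle.} Split the sum according to whether $(X,Y)$ is \emph{spread}: no affine hyperplane of $\F_p^r$ captures more than $(1-\delta)n$ of the $\xi_i$ or more than $(1-\delta)m$ of the $\eta_j$, with the caveat that when $\mathbf 1\in\Sp(\text{columns of }W)$ all $\xi_i,\eta_j$ necessarily lie on one common affine hyperplane and spreadness is instead demanded inside it. For spread $(X,Y)$ each factor $\rho$ equals $p^{-r}$ (or $p^{-(r-1)}$ in the exceptional case) up to $\exp(-c\delta n/p^2)$, so multiplying the $N$ factors and summing over full-rank spread $(X,Y)$ reproduces the idealized computation and contributes $p^r+\exp(-Cn/p^2)$ to $E_r$. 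The core of the argument is bounding the complementary \emph{sparse} contribution by $\exp(-Cn/p^2)$, and this is where $\alpha>1/p$ enters, exactly as in Koplewitz's analysis. If an affine hyperplane $H$ contains all but $t_1$ of the $\xi_i$, the $n$ factors over $V_1$ are uncontrolled, but the $m=\alpha n$ factors over $V_2$ now involve $(\eta_j-\xi_i)_i$ with the $\xi_i$ pinned near $H$, which pushes each such $\rho$ down by a definite factor; the number of ways to place the pinned part of $X$ is of order $p^{n-t_1}$-ish, and $\alpha p>1$ is precisely the inequality that makes the $V_2$-savings beat the entropy of choosing $X$. I would stratify over the codimension and affine type of the capturing subspace on each side, the numbers $t_1,t_2$ of outliers, and $\rk W$, bounding in each stratum $(\text{count of }(X,Y))\times(\text{the }\prod\rho\text{ bound from anti-concentration on whichever side is spread})$. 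The genuinely delicate stratum --- and the step I expect to be the main obstacle --- is a single capturing hyperplane with $o(n)$ outliers, the regime Koplewitz identified as critical at $\alpha=1/p$; here one needs the sharp anti-concentration constant together with a precise count of matrices whose column space (nearly) contains $\mathbf 1$, to verify that the exponent is $\le -Cn/p^2$ with room to spare once $\alpha>1/p$.

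\textbf{Tail bound and conclusion.} Running the same stratified estimate with $r=t$ yields the crude tail bound $\P(\corank(M/p)\ge 1+t)\le\exp(-C\,t\,n/p^2)$ for all $t\ge 1$ --- the $\alpha=1$ extension of Koplewitz's ``expected $p$-rank $=O(1)$'' --- which controls the contribution of the large-corank events to the finitely many moments in play. Combining $E_r=p^r+\exp(-Cn/p^2)$ for $1\le r\le R$ with this tail bound and the effective moment-inversion for distributions on $\F_p$-dimensions gives
\[
\P(\corank(M/p)=1+k)=\frac{1}{p^{k^2+k}}\,\frac{\prod_{i=k+2}^{\infty}(1-1/p^i)}{\prod_{i=1}^{k}(1-1/p^i)}+\exp(-Cn/p^2),
\]
the hypothesis $p\ll\sqrt n$ being exactly what keeps the moment range $1\le r\le R$ nonempty.
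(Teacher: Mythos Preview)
Your approach is sound but takes a genuinely different route from the paper. The paper runs a row-exposure Markov chain: Proposition~\ref{prop:rect-full-rank} shows $M_{(1+\alpha)n-r}$ is full rank with probability $1-O(\exp(-Cr))$, and Proposition~\ref{prop:rank} shows that for each of the last $r\sim\delta n$ rows the new row lands in the current row space with probability $p^{-(l-1)}+O(\exp(-Cn/p^2))$, where $l$ is the current codimension --- this is powered by proving (Proposition~\ref{prop:non-sparse}) that every normal vector to the row space has $\ge\delta' n$ non-constant coordinates among the first $n$, whence Fourier anti-concentration (Proposition~\ref{laplacian_concentration}, Lemma~\ref{smallrhogood}) applies. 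The endgame is a direct coupling to the uniform $(n{+}1)\times n$ model, whose corank law is quoted from \cite{FG}. Your plan instead computes the surjection moments $E_r=\E[\#\Sur(\ker(M/p),\F_p^r)]$ via the Fourier expansion of $\rho$ and a spread/sparse dichotomy on the candidate kernel matrix $W$, in the style of Wood and Koplewitz. Both routes rest on the same anti-concentration input and the same $\alpha>1/p$ threshold; the paper's argument is more modular (it reduces to a known limit law rather than recomputing it and never sums over $W$), while your moment method is more portable --- pushed to moments against arbitrary finite abelian $p$-groups it would in principle upgrade the rank statement to the full Cohen--Lenstra conjecture the paper poses at the end of the introduction. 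Two places in your sketch warrant care: the sparse stratum with a single capturing affine hyperplane and $o(n)$ outliers is precisely the regime where the paper invests its effort (handled there dually, via non-sparseness of normal vectors, rather than by a direct count over $W$), and the effective moment-to-distribution inversion with error $\exp(-Cn/p^2)$ uniformly in $k$ requires that your tail bound and the truncation at $r\le R\asymp\sqrt{n}/(p\sqrt{\log p})$ interact correctly --- the coefficients in the inversion are of size $p^{O(r^2)}$, so $R$ cannot be too large relative to $n/p^2$.
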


Based on computations and intuition we conjecture the rank of the sandpile group for undirected bipartite graphs should have the distribution predicted by Equation~\ref{symCL} for $p>2$. However our method depends on the independence between the rows. We further conjecture that the cokernels in the directed and undirected model are actually Cohen-Lenstra and symmetric Cohen-Lenstra distributed.

\subsection{Notation} 

For $J$ a set of indices we denote by $M^J$ the restriction of the matrix to columns in $J$ and $M_J$ the restriction of the matrix to rows in $J$. When $J = [k]$ we abuse the notation by writing $M^k$ or $M_k$. In particular $M_n$ is the top half of our matrix and $M_{(1+\alpha)n}$ is $M$.

Generally $W$ will denote the row space of $M$ with the same subscripting convention and $\beta = \min(q, 1-q)$. But we'll define them in place before using them.

We use $\mathbbm{1}$ to denote the all $1$'s vector.

We fix a prime $p$ and work mod $p$ for all of our results. We treat $p$ as a constant but our arguments work for $p=o(\sqrt{n})$. We note our results don't necessarily hold when $p$ grows faster with $n$. We use $C$ to denote constants, not necessarily the same, which may depend on $p,q$ and $\alpha$.
\subsection{Structure of Paper} Our method is a row exposure process similar to the method by Maples \cite{M2} and Nguyen and Paquette \cite{NgP}. Note that we use a row exposure process instead of a column exposure process because as the Laplacian of a directed graph is traditionally defined the rows are independent but not the columns.

In Section~\ref{fullrk} we show that $M_{(1+\alpha)n-r}$ is full rank with high probability with respect to $r$. We make heavy use of a variant of Odlyzko's lemma, Lemma~\ref{lemma:odlyzko}.

In Section~\ref{rankevolution} we show that the remaining $r$ rows increase the rank of the matrix with probability similar to what one would predict with the uniform model. The argument will make use of a notion of structure and results about it due to Meehan, Luh and Nguyen \cite{LMNg}. The technique is modified for the Laplacian setting.

In Section~\ref{finalproof} we combine these two facts to show the distribution of the rank of $M$ is exponentially close to the distribution of the rank of a uniform random matrix.
\subsection{Acknowledgement}

Thanks to Hoi Nguyen for leading us in a summer reading group which lead us to this problem. Thanks to Nathan Kaplan for pointing out some important omissions, errors and ambiguities in an earlier draft. Finally, we thank the anonymous reviewer for corrections and insightful comments.

\section{Full rank until final rows}\label{fullrk} 

In this section we prove that $M_{(1+\alpha)n-r}$ is full rank with probability at least $1-\exp(-C r)$. 

We reintroduce a concept from Koplewitz \cite{K} itself a variant of a definition from Maples \cite{M2} called min-entropy. It combines a nonconcentration property with a weak notion of independence. 

\begin{definition}
Let $A$ be a random matrix over $\Z/p\Z$, $\beta>0$ and $I$ a set of entries in $A$. We say that $a_{ij}$ has min-entropy $\beta$ with respect to $I$ if for any conditioning on a possible set of values for the entries in $I$ the probability that $a_{ij}$ takes on any value in $\Z/p\Z$ is bounded above by $1-\beta$.

We say the matrix or vector has min-entropy $\beta$ if every entry of the matrix or vector has min-entropy $\beta$ with respect to the set of other entries. We say a subset of entries has min-entropy $\beta$ if every entry in the subset has min-entropy $\beta$ with respect to the other entries in the subset.
\end{definition}

The Laplacian has min-entropy $0$ because the entries in a row off the diagonal determine the value of the diagonal. Instead we have the following.
\begin{fact}\label{minentropyforlaplacian}
The diagonal entries of $M_n$ together with the entries of $M_n$ in columns $n+1$ to $(1+\alpha)n-p$ have min-entropy $\min(q,1-q)^p$.
\end{fact}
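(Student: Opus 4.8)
The plan is to unwind the Laplacian, locate for each entry of the distinguished set a reservoir of $p$ ``unused'' independent coins, and reduce the statement to two elementary anticoncentration facts for binomial variables modulo $p$. Throughout assume $0<q<1$ (otherwise $\min(q,1-q)^p=0$ and there is nothing to prove) and $n$ large enough that the column ranges make sense. For $i\in[n]$ and $n<j\le(1+\alpha)n$ let $\eta_{ij}=\indicator{i\to j}$; these are i.i.d.\ Bernoulli$(q)$. With the usual convention, for $i\in[n]$ we have $M_{ii}=\sum_{j>n}\eta_{ij}$, $M_{ij}=-\eta_{ij}$ for $j>n$, and $M_{ij}=0$ for $j\in[n]\setminus\{i\}$. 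Let $I$ be the set of entries in the statement, and fix $a\in I$, lying in some row $i\in[n]$. The key observation is that the $p$ coins $\eta_{ij}$ with $(1+\alpha)n-p<j\le(1+\alpha)n$ appear in no entry of $I$ except $M_{ii}$, and there only through $T:=\sum_{(1+\alpha)n-p<j\le(1+\alpha)n}\eta_{ij}\sim\mathrm{Bin}(p,q)$. Hence after conditioning on any admissible values of $I\setminus\{a\}$, those $p$ coins remain i.i.d.\ Bernoulli$(q)$ and independent of everything else, subject only to the single linear relation imposed by $M_{ii}$ when $M_{ii}\in I\setminus\{a\}$.

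\emph{Diagonal entry.} If $a=M_{ii}$, conditioning on $I\setminus\{a\}$ fixes $\eta_{ij}=-M_{ij}$ for every $n<j\le(1+\alpha)n-p$, hence fixes $c:=\sum_{n<j\le(1+\alpha)n-p}\eta_{ij}$, and leaves $M_{ii}\equiv c+T\pmod p$ with $T\sim\mathrm{Bin}(p,q)$ independent of the conditioning. So it suffices that $\P(T\equiv r\bmod p)\le 1-\min(q,1-q)^p$ for every $r$. If $r\not\equiv 0$ the complement contains $\{T=0\}\cup\{T=p\}$, giving $\P(T\not\equiv r)\ge(1-q)^p+q^p\ge\min(q,1-q)^p$; if $r\equiv 0$ the complement contains $\{T=1\}$ and $\{T=p-1\}$, giving $\P(T\not\equiv 0)\ge\max\!\big(pq(1-q)^{p-1},\,pq^{p-1}(1-q)\big)\ge\min(q,1-q)^p$, the last step being the elementary inequality $p\ge\big(\min(q,1-q)/\max(q,1-q)\big)^{p-1}$.

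\emph{Off-diagonal entry.} If $a=M_{ij}$ with $n<j\le(1+\alpha)n-p$, then $a=-\eta_{ij}$; conditioning on $I\setminus\{a\}$ fixes $\eta_{ij'}$ for $j'$ in that range with $j'\neq j$ (call their sum $c'$) and fixes $M_{ii}$, so $\eta_{ij}$ is a single Bernoulli$(q)$ conditioned on $\eta_{ij}+T\equiv s:=M_{ii}-c'\pmod p$, where $\eta_{ij}$ together with the $p$ coins summing to $T$ form $p+1$ i.i.d.\ Bernoulli$(q)$'s independent of the rest. Put $S:=\eta_{ij}+T\sim\mathrm{Bin}(p+1,q)$, so by exchangeability $\P(\eta_{ij}=1\mid S=t)=t/(p+1)$. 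Since $S\in\{0,\dots,p+1\}$, writing $r$ for the residue of $s$ in $\{0,\dots,p-1\}$, the conditioning forces $S=r$ when $2\le r\le p-1$, $S\in\{0,p\}$ when $r=0$, and $S\in\{1,p+1\}$ when $r=1$. In the first case $\P(\eta_{ij}=b\mid\cdot)\in\{\tfrac r{p+1},\tfrac{p+1-r}{p+1}\}\subseteq[\tfrac2{p+1},\tfrac{p-1}{p+1}]$, hence $\le 1-\tfrac2{p+1}\le 1-\min(q,1-q)^p$ because $\min(q,1-q)^p\le 2^{-p}\le\tfrac2{p+1}$. In the case $r=0$ a direct computation gives $\P(\eta_{ij}=1\mid\cdot)=\frac{pq^p}{(1-q)^p+(p+1)q^p}$ and $\P(\eta_{ij}=0\mid\cdot)=\frac{(1-q)^p+q^p}{(1-q)^p+(p+1)q^p}$, and both are $\ge\min(q,1-q)^p$ by elementary estimates using $\min(q,1-q)\le\tfrac12$ and $2^p\ge p+2$ (valid for every prime $p$); the case $r=1$ follows by the symmetry $\eta\mapsto 1-\eta$, which swaps $q\leftrightarrow 1-q$. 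As $M_{ij}$ only takes the values $0$ and $-1$, this gives $\P(M_{ij}=v\mid\cdot)\le 1-\min(q,1-q)^p$ for all $v$, finishing the proof.

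The one place to be careful — and the step I would double-check most — is the conditional-independence bookkeeping of the first paragraph: verifying that conditioning on $I\setminus\{a\}$ genuinely leaves the last $p$ coins of row $i$ i.i.d.\ and coupled to everything else only through the defining equation of $M_{ii}$. Once that is pinned down the remaining inequalities are routine; the only mild wrinkle is the collision at residue $0$ (two values of the binomial agreeing mod $p$), which is exactly why one cannot do better than $\min(q,1-q)^p$ here.
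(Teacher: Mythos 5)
The paper states this Fact without proof, so there is nothing to compare against; your argument is a correct and complete justification of it. The structural bookkeeping is right: within a row the last $p$ coins enter the set $I$ only through the diagonal entry, rows are independent, and conditioning on $I\setminus\{a\}$ therefore reduces everything to anticoncentration of $T\sim\mathrm{Bin}(p,q)$ modulo $p$ (diagonal case) or of a single coin given $S\sim\mathrm{Bin}(p+1,q)$ conditioned on its residue (off-diagonal case); the exchangeability identity $\P(\eta_{ij}=1\mid S=t)=t/(p+1)$ and the residue collisions $\{0,p\}$ and $\{1,p+1\}$ are handled correctly. I checked the two estimates you label ``elementary'': for $\P(\eta_{ij}=1\mid S\equiv 0)=\frac{pq^p}{(1-q)^p+(p+1)q^p}\ge\min(q,1-q)^p$ a short case split on $q\lessgtr\tfrac12$ works (e.g.\ for $q\le\tfrac12$ it reduces to $p\ge(1-q)^p+(p+1)q^p$, which holds since the right side is at most $1+(p+1)2^{-p}\le p$), and for $\P(\eta_{ij}=0\mid S\equiv 0)=\frac{(1-q)^p+q^p}{(1-q)^p+(p+1)q^p}$ the cleanest route is to note this is $\ge\frac1{p+1}\ge 2^{-p}\ge\min(q,1-q)^p$, which you might prefer to the unstated computation. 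These are minor presentational points; the proof stands.
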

Many of the lemmas in this section make use of the following variant of a lemma of Odlyzko \cite{O0} and its corollary. Corollary~\ref{fullrankforrects} is Theorem~10 in \cite{K} and Lemma~\ref{lemma:odlyzko} is a statement in its proof.
\begin{lemma}[Odlyzko]\label{lemma:odlyzko}
Let $V$ be a subspace of $\F_p^n$ of codimension $k$. Then, for a random vector $X = (x_i)_{i=1}^n\in \F_p^n$ with min-entropy $\beta$, we have
\[
\P(X\in V) \leq (1-\beta)^k.
\]
\end{lemma}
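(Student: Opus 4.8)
The plan is to induct on the codimension $k$, stripping off one linear constraint cutting out $V$ at a time and paying a factor $(1-\beta)$ at each step using the min-entropy hypothesis. The base case $k=0$ is immediate: then $V=\F_p^n$ and $\P(X\in V)=1=(1-\beta)^0$.

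For the inductive step, assume the bound for all subspaces of codimension $k-1$ and let $\codim V=k\ge 1$. First I would choose a nonzero linear functional $f(x)=\sum_{j} c_j x_j$ vanishing on $V$ and an index $i$ with $c_i\ne 0$, so that on $V$ the coordinate $x_i$ is the fixed linear function $g(x'):=-c_i^{-1}\sum_{j\ne i}c_j x_j$ of the remaining coordinates $x'=(x_j)_{j\ne i}$. Letting $W\subset\F_p^{n-1}$ be the image of $V$ under the projection forgetting the $i$-th coordinate, the induced map $V\to W$ is a bijection (its inverse reconstructs $x_i$ via $g$), so $\codim_{\F_p^{n-1}}W=k-1$. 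Then I would write $\{X\in V\}=\{X'\in W\}\cap\{x_i=g(X')\}$, condition on $X'$, and apply the min-entropy of $x_i$ with respect to the other coordinates of $X$ to get $\P(x_i=g(X')\mid X')\le 1-\beta$ pointwise, hence
\[
\P(X\in V)\le (1-\beta)\,\P(X'\in W).
\]

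To finish I would apply the inductive hypothesis to $W$ and $X'$, after observing that $X'$ still has min-entropy $\beta$: for $j\ne i$, the conditional probability that $x_j$ takes a given value under any conditioning on $\{x_\ell:\ell\ne i,j\}$ is a convex combination of the corresponding conditional probabilities under conditionings on all of $\{x_\ell:\ell\ne j\}$, each of which is at most $1-\beta$. This yields $\P(X'\in W)\le(1-\beta)^{k-1}$ and closes the induction.

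There is no serious obstacle here --- this is a lemma rather than a theorem --- and the only thing to be careful about is the bookkeeping with the definition of min-entropy: each application of the defining inequality must be made while conditioning on a \emph{subset} of the remaining entries, and one needs the (easy) fact just noted that min-entropy is inherited by sub-collections of coordinates. A variant that sidesteps the induction is to put a basis of $V^\perp$ in reduced row echelon form, making each of the $k$ pivot coordinates a linear function of the non-pivot coordinates alone; exposing the non-pivot coordinates and then revealing the $k$ pivot coordinates one by one produces the factor $(1-\beta)^k$ in one shot.
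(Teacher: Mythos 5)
Your proof is correct and is essentially the standard argument here: the paper does not reprove this lemma but attributes it to the proof of Theorem~10 in Koplewitz \cite{K}, which uses the same coordinate-exposure/induction-on-codimension idea you describe. The one genuine point of care --- that min-entropy is inherited by the sub-vector $X'$ so the inductive hypothesis applies --- you handle correctly with the convex-combination observation, and your row-echelon variant is an equally valid way to package the same estimate.
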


\begin{corollary}\label{fullrankforrects}
An $n\times m$, $n>m$ matrix of min-entropy $\beta$ is full rank with probability at least $$1-\frac{1}{\beta^2}(1-\beta)^{n-m}.$$
\end{corollary}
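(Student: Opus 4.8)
The plan is to prove the corollary by exposing the $m$ columns of the matrix one at a time and showing that, with high probability, each newly revealed column fails to lie in the span of the previously revealed ones. Since the matrix is tall ($n>m$), this forces the $m$ columns to be linearly independent, which is exactly full rank.

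Write $A$ for the matrix and $c_1,\dots,c_m\in\F_p^n$ for its columns. For $1\le j\le m$ set $V_{j-1}=\Sp(c_1,\dots,c_{j-1})$, a subspace of dimension at most $j-1$, hence of codimension at least $n-j+1\ge n-m+1$. The key point is that, conditioned on any fixed values of $c_1,\dots,c_{j-1}$, the column $c_j$ still has min-entropy $\beta$ as a vector in $\F_p^n$: by hypothesis each entry of $c_j$ has min-entropy $\beta$ with respect to the set of all other entries of $A$, and min-entropy with respect to a set $I$ trivially implies min-entropy with respect to any subset of $I$ (average over the remaining coordinates via the law of total probability). In particular each entry of $c_j$ has min-entropy $\beta$ with respect to the other entries of $c_j$ together with all of $c_1,\dots,c_{j-1}$. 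Since $V_{j-1}$ is measurable with respect to the conditioning, Lemma~\ref{lemma:odlyzko} applied to $c_j$ and $V_{j-1}$ gives
\[
\P\big(c_j\in V_{j-1}\ \big|\ c_1,\dots,c_{j-1}\big)\le (1-\beta)^{\codim V_{j-1}}\le (1-\beta)^{\,n-j+1}.
\]

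Taking expectations and summing a union bound over $j=1,\dots,m$, the probability that $A$ fails to be full rank is at most
\[
\sum_{j=1}^{m}(1-\beta)^{\,n-j+1}\ \le\ \sum_{i\ge n-m+1}(1-\beta)^{i}\ =\ \frac{(1-\beta)^{\,n-m+1}}{\beta}\ \le\ \frac{1}{\beta^{2}}(1-\beta)^{\,n-m},
\]
which is the claimed bound (in fact a slightly stronger one). There is essentially no obstacle in this argument beyond bookkeeping; the only step requiring a moment's care is the assertion that conditioning on an entire block of columns preserves the min-entropy of the remaining entries, which follows immediately from the definition and the subset remark above. Note that we did not need the rows of $A$ to be independent — the min-entropy hypothesis already supplies the weak independence used by Odlyzko's lemma — so column exposure is available even though in the Laplacian application it is the rows that are genuinely independent.
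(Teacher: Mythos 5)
Your proof is correct, and it follows essentially the route the paper intends: the paper does not prove the corollary itself but cites it as Theorem~10 of Koplewitz \cite{K} (with Lemma~\ref{lemma:odlyzko} being a step inside that proof), and your column-exposure argument—conditional application of Lemma~\ref{lemma:odlyzko}, justified by the observation that min-entropy with respect to a set passes to subsets, followed by a union bound and a geometric series—is exactly that standard argument. Note that your bound $(1-\beta)^{n-m+1}/\beta$ is in fact slightly sharper than the stated $\beta^{-2}(1-\beta)^{n-m}$, so the corollary follows a fortiori.
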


These facts are not directly applicable in the Laplacian model, as these models inherently yield matrices/vectors with 0 min-entropy. But they will be useful applied to trimmed submatrices/subvectors which have nontrivial min-entropy.

\begin{lemma}\label{lemma:first-n-rows} There exists $C>0$ such that with probability at least $1-O(\exp(-Cn))$, the matrix $M_n$ has full rank.
\end{lemma}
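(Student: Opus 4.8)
The plan is to reduce the claim to a full-rank question about a genuinely Bernoulli submatrix and then invoke Corollary~\ref{fullrankforrects}. Index the left and right vertex classes by $[n]$ and $[\alpha n]$, let $B=(B_{jc})$ be the $\F_p$-reduction of the left-to-right adjacency matrix (so the $B_{jc}$ are i.i.d.\ $\mathrm{Ber}(q)$), and set $d_j=\sum_c B_{jc}$, the out-degree of the $j$th left vertex. In the row-sum-zero convention the top block is $M_n=[\,\diag(d_1,\dots,d_n)\mid -B\,]$, so a vector $\lambda\in\F_p^n$ annihilates the rows of $M_n$ precisely when $\lambda_j d_j=0$ for every $j$ and $\lambda^\top B=0$; the first requirement forces $\mathrm{supp}(\lambda)\subseteq S$, where $S:=\{j:p\mid d_j\}$. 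Consequently
\[
\{M_n\text{ does not have full row rank}\}=\{\rk_{\F_p}(B_S)<|S|\},
\]
and it suffices to bound $\P(\rk(B_S)<|S|)$ by $e^{-Cn}$.

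The first step I would carry out is to control $|S|$. The indicators $\indicator{p\mid d_j}$ are i.i.d., and expanding $\P(p\mid d_j)=\tfrac1p\sum_{t=0}^{p-1}(1-q+qe^{2\pi i t/p})^{\alpha n}$ and using $|1-q+qe^{2\pi i t/p}|<1$ for $t\not\equiv 0$ (which holds because $0<q<1$) gives $\P(p\mid d_j)=\tfrac1p+O(e^{-c_0 n})$. Since $\alpha>1/p$, the mean of $|S|$ is $\tfrac np+o(1)$, which lies below $\alpha n-p$ by a linear margin, so a Chernoff bound for a sum of i.i.d.\ $0/1$ variables yields $\P(|S|\ge s_0)\le e^{-c_1 n}$ for the threshold $s_0:=\big\lceil\tfrac np+\tfrac12(\alpha-\tfrac1p)n\big\rceil$, which for large $n$ obeys $\alpha n-p-s_0\ge c_2 n$ with $c_2:=\tfrac14(\alpha-\tfrac1p)>0$.

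The core of the argument is to bound $\P(\rk(B_S)<|S|\mid |S|=s)$ for $s<s_0$. As the rows of $B$ are i.i.d., hence exchangeable, and $S$ is a permutation-equivariant function of $B$, this equals $\P(\rk(B_{[s]})<s\mid S=[s])$. The event $\{S=[s]\}$ is an intersection of single-row events, so conditionally on it the rows remain independent and, for $j\le s$, the $j$th row is distributed as a vector of $\alpha n$ i.i.d.\ $\mathrm{Ber}(q)$ entries conditioned on having sum $\equiv 0\pmod p$. Passing to $B':=B^{[\alpha n-p]}$, the first $\alpha n-p$ columns: since the last $p$ coordinates of each row can realize any residue mod $p$ with positive probability, they absorb the congruence whatever the first $\alpha n-p$ entries are, and a short computation in the spirit of Fact~\ref{minentropyforlaplacian} shows that, conditionally on $\{S=[s]\}$ and on any values of the remaining entries, each entry of $B'_{[s]}$ still takes each value with probability at most $1-\min(q,1-q)^{p+1}$. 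Thus $B'_{[s]}$ is an $s\times(\alpha n-p)$ matrix of min-entropy $\beta_0:=\min(q,1-q)^{p+1}>0$, and applying Corollary~\ref{fullrankforrects} to its transpose (valid since $s<s_0<\alpha n-p$) together with $\rk(B_{[s]})\ge\rk(B'_{[s]})$ yields $\P(\rk(B_{[s]})<s\mid S=[s])\le\beta_0^{-2}(1-\beta_0)^{\alpha n-p-s}$.

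Assembling the bounds,
\[
\P(\rk(B_S)<|S|)=\sum_s\P(|S|=s)\,\P(\rk(B_{[s]})<s\mid S=[s])\le\P(|S|\ge s_0)+\beta_0^{-2}(1-\beta_0)^{\alpha n-p-s_0},
\]
which is $e^{-Cn}$ by the previous two steps (with $C$ depending on $p,q,\alpha$). The one genuinely delicate feature throughout is that the Laplacian model has min-entropy $0$ --- each row pins down its own diagonal entry, and within a row the off-diagonal entries determine the diagonal --- so Lemma~\ref{lemma:odlyzko} and Corollary~\ref{fullrankforrects} cannot be used on $M_n$ as it stands; peeling off the bad rows $S$, using $\alpha>1/p$ to make $|S|$ linearly smaller than $\alpha n$, and spending $p$ columns to soak up the residual within-row congruence are exactly what it takes to expose a submatrix with genuine positive min-entropy. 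A secondary point worth care is that one must \emph{not} union bound over the $\binom ns$ candidate sets $S$, since the factor $\binom ns$ would swamp $(1-\beta_0)^{\alpha n-p-s}$ when $\alpha$ is only slightly larger than $1/p$; exchangeability is what lets us fix $S=[s]$ and union bound instead only over the value of $|S|$.
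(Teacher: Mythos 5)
Your proposal is correct and follows essentially the same route as the paper's proof: reduce full rank of $M_n$ to linear independence of the rows whose row sum vanishes mod $p$, bound the number of such rows by roughly $n/p$ via a character-sum estimate plus Chernoff (using $\alpha>1/p$), and then apply the min-entropy full-rank bound (Corollary~\ref{fullrankforrects}, after trimming $p$ columns as in Fact~\ref{minentropyforlaplacian}) to the conditioned Bernoulli submatrix. Your treatment of the conditioning via exchangeability and the explicit min-entropy computation just make precise what the paper does implicitly; there is no substantive difference.
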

\begin{proof}
Write,

\[
\renewcommand\arraystretch{1.3}
M_n = \left(\begin{array}{cccc|cccc}
-\delta_1 & 0 & \hdots & 0 &  x_{1,1} & x_{1,2}& \hdots & x_{1,\alpha n}\\
0 & -\delta_2& \hdots & 0 & x_{2,1} & x_{2,2} & \hdots & x_{2,\alpha n}\\
\vdots & \vdots & \ddots & \vdots &   \vdots & \vdots & \ddots & \vdots\\
0 & 0 &\hdots & -\delta_n &  x_{n,1}  & x_{n,2} & \hdots & x_{n,\alpha n}\\
\end{array}\right)
\]

where $\delta_j =\sum_{i=1}^{\alpha n}x_{j,i}$. Notice that the set of rows with $\delta_j\neq 0$ are linearly independent and a row with $\delta_j=0$ is not in its span (unless the row is \textbf{0}). So it suffices to show that with high probability, the rows with $\delta_j=0$ are linearly independent.

Towards that, we show there aren't many rows with $\delta_i=0$. Let $I\subseteq[n]$ be the set of all $j$ with $\delta_j = 0$. Lemma~\ref{lemma:zerosum} implies
\begin{align*}
    \P(i\in I)  \leq 1/p + \eps'.
\end{align*}
Where $\eps' = \exp(-\alpha n/2p^2)$. Noting that the rows are probabilistically independent and applying a Chernoff bound, we have for any $\eps>0$,
\[
\mathop{\mathbb{P}}(|I|\leq (1/p+ \eps' + \eps) n)\geq 1-\exp(-\eps^2n).
\]

Next we show that $M_I$, the submatrix generated by the rows in $I$, is full rank with high probability. To do this, we consider only the columns in $J = \{n+1,\dots,(1+\alpha)n\}$, and show $M_I^J$ is also full rank with high probability. Observe that $M_I^J$ is $|I|\times \alpha n$, but the sum of each row is already conditioned to be 0. So, truncate the final $p$ columns to obtain an $|I|\times (\alpha n-p)$ matrix which by Fact~\ref{minentropyforlaplacian} has min-entropy $\beta = \min(q,1-q)^p$. Therefore by Lemma~\ref{fullrankforrects} this matrix is full rank with probability $1-\frac{1}{\beta^2}(1-\beta)^{-(\alpha n-p-|I|))}$.

Conditioning on $|I|\leq (1/p + \eps' + \eps) n$, the rows in such a matrix are linearly independent with probability at least $1 - O(\exp(-C(\alpha-1/p-\eps' - \eps)n))$. 
Combining this with the above, taking $\eps = \alpha - 1/p - \eps'$, which is positive for sufficiently large $n$,
\[
\P(\rank(M_n) = n)\geq 1-O(\exp(-C(\alpha - 1/p)n)) - \exp(-(\alpha-1/p - \eps')^2n).
\]
Note it is crucial that $\alpha>1/p$ for this result. In the case $\alpha<1/p$, $M_I^J$ is not full rank with high probability and the lemma is false.

\end{proof}
\begin{lemma}\label{lemma:zerosum}
For any $n$, $p\ll \sqrt{n}$, $\left|\P\left(\sum_{i=1}^nx_i = 0\right)-\frac{1}{p}\right| \leq \exp(-n/2p^2)$.
\end{lemma}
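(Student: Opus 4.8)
The plan is to compute $\P(\sum_{i=1}^n x_i = 0)$ exactly via a discrete Fourier transform over $\Z/p\Z$, and then bound the nontrivial characters. Each $x_i$ is an independent indicator of an edge, so $x_i$ is Bernoulli$(q)$ taking values in $\{0,1\} \subset \Z/p\Z$. Let $\omega = e^{2\pi i/p}$ be a primitive $p$-th root of unity. By orthogonality of characters,
\[
\P\left(\sum_{i=1}^n x_i \equiv 0 \ (\mathrm{mod}\ p)\right) = \frac{1}{p}\sum_{t=0}^{p-1} \prod_{i=1}^n \E[\omega^{t x_i}] = \frac{1}{p}\sum_{t=0}^{p-1} \left( (1-q) + q\omega^t \right)^n.
\]
The $t=0$ term contributes exactly $\tfrac1p$, so it remains to show $\frac1p \sum_{t=1}^{p-1} |(1-q)+q\omega^t|^n \le \exp(-n/2p^2)$; since there are $p-1 < p$ nontrivial terms, it suffices to bound each by $p\exp(-n/2p^2)$, or more cleanly to show $|(1-q)+q\omega^t| \le \exp(-1/2p^2)$ for every $t \not\equiv 0$.

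The key estimate is geometric: $(1-q) + q\omega^t$ is a point on the segment between $1$ and $\omega^t$, at parameter $q \in (0,1)$. Writing $\theta_t = 2\pi t/p$, one computes $|(1-q)+q\omega^t|^2 = 1 - 2q(1-q)(1-\cos\theta_t)$. For $t \in \{1,\dots,p-1\}$ we have $|\theta_t| \ge 2\pi/p$ away from multiples of $2\pi$, so $1 - \cos\theta_t \ge \tfrac{2}{\pi^2}\theta_t^2 \ge \tfrac{2}{\pi^2}\cdot\tfrac{4\pi^2}{p^2} \cdot c$ — more simply $1-\cos\theta_t \ge \tfrac{1}{2}(2\pi/p \mod 2\pi)^2 \cdot (\text{const})$, giving $1-\cos\theta_t \gtrsim 1/p^2$. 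Combined with $2q(1-q)$ being a fixed positive constant (absorbed into the implied constant, though here the statement's bound $\exp(-n/2p^2)$ suggests using $2q(1-q)(1-\cos\theta_t) \ge 1/p^2$ after choosing constants, or simply noting the clean inequality $1 - x \le e^{-x}$), we get $|(1-q)+q\omega^t|^n \le (1 - c/p^2)^{n} \le \exp(-cn/p^2)$. Summing the $p-1$ terms and dividing by $p$ only helps, and absorbing constants yields the claimed bound $\exp(-n/2p^2)$ (adjusting the constant $C$ if needed, consistent with the paper's convention that constants may change).

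I expect the main technical point — though it is routine — to be pinning down the constant in the lower bound for $1 - \cos\theta_t$ uniformly over $t \in \{1,\dots,p-1\}$ and over the allowed range $p \ll \sqrt n$, so that the final exponent genuinely dominates $n/2p^2$; one must be slightly careful that $q$ (hence $2q(1-q)$) is treated as a fixed constant depending only on the model, not on $n$, which is fine since $q$ is the fixed edge probability. There is also the trivial edge case $q \in \{0,1\}$, which is excluded or degenerate (the graph would be deterministic), so we assume $0 < q < 1$. No concentration or structural input is needed here; this is a self-contained character-sum estimate.
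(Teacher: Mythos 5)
Your proposal is correct and uses essentially the same method as the paper: the paper simply observes that the quantity is $\rho(\mathbbm{1})$ in the sense of \cite{LMNg} and cites their Corollary 4.6, which is proved by exactly the character-sum estimate you carry out explicitly (compare the paper's own Proposition~3.4, which reproduces that argument in the Laplacian setting). The only quibble is that your explicit bound comes out as $\exp(-cq(1-q)n/p^2)$, so the literal constant $1/2$ in the exponent requires $q$ bounded away from $0$ and $1$; this is harmless given the paper's stated convention that constants may depend on $q$.
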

\begin{proof}
Note,
\begin{align*}
    \left|\P\left(\sum_{i=1}^n x_i = 0\right)-\frac{1}{p}\right| \leq \sup_{a\in\F_p} \left|\P\left(\sum_{i=1}^n x_i = a\right)-\frac{1}{p}\right|.
\end{align*}
This is equal to $\rho(\mathbbm{1})$ as defined in \cite{LMNg}. Therefore the non-Laplacian version of Lemma \ref{laplacian_concentration}, which is Corollary 4.6 from that paper, gives the result.
\end{proof}
 
\begin{proposition}\label{prop:rect-full-rank}
There exists $C>0$ such that for any $r<n$ the matrix $M_{(1+\alpha)n-r}$ generated by rows $X_1,\dots,X_n,Y_{1},\dots,Y_{\alpha n-r}$ is full rank with probability $1-O(\exp(-Cr))$.
\end{proposition}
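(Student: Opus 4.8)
The plan is a row-exposure argument that extends Lemma~\ref{lemma:first-n-rows}. First I would condition on the event $\mathcal G_0$ that $M_n$ is full rank; by Lemma~\ref{lemma:first-n-rows} we have $\P(\overline{\mathcal G_0})=O(\exp(-Cn))$, and since we may assume $1\le r\le\alpha n\le n$ this is $O(\exp(-Cr))$. On $\mathcal G_0$, set $V_0=\Sp(X_1,\dots,X_n)$ and expose the rows $Y_1,\dots,Y_{\alpha n-r}$ one at a time, writing $V_t=V_{t-1}+\Sp(Y_t)$ and $\mathcal E_t=\{Y_t\in V_{t-1}\}$. On $\mathcal G_0\cap\overline{\mathcal E_1}\cap\cdots\cap\overline{\mathcal E_{t-1}}$ one has $\dim V_{t-1}=n+t-1$, and if none of the events $\mathcal E_t$ occurs then $M_{(1+\alpha)n-r}$ has rank $(1+\alpha)n-r$, i.e.\ full rank. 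So it suffices to bound $\sum_{t=1}^{\alpha n-r}\P\big(\mathcal G_0\cap\overline{\mathcal E_1}\cap\cdots\cap\overline{\mathcal E_{t-1}}\cap\mathcal E_t\big)$.

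To estimate a single term, observe that $Y_t=\sum_{k=1}^n y_{t,k}(e_k-e_{n+t})$ always lies in the $n$-dimensional space $W_t$ of zero-sum vectors supported on $\{1,\dots,n\}\cup\{n+t\}$, and that its coordinates in the basis $(e_k-e_{n+t})_{k=1}^n$ of $W_t$ are the genuinely i.i.d.\ entries $y_{t,k}$, which have min-entropy $\beta=\min(q,1-q)$. Since $V_{t-1}$ depends only on $X_1,\dots,X_n,Y_1,\dots,Y_{t-1}$, it is independent of these entries, so applying Lemma~\ref{lemma:odlyzko} inside $W_t\cong\F_p^n$ gives $\P(\mathcal E_t\mid X_1,\dots,X_n,Y_1,\dots,Y_{t-1})\le(1-\beta)^{\codim_{W_t}(V_{t-1}\cap W_t)}$. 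Every row of $M_{(1+\alpha)n-r}$ and every vector of $W_t$ has zero coordinate sum, so $V_{t-1}+W_t$ lies in the $((1+\alpha)n-1)$-dimensional space $Z$ of all zero-sum vectors; hence on $\mathcal G_0\cap\overline{\mathcal E_1}\cap\cdots\cap\overline{\mathcal E_{t-1}}$ we get $\codim_{W_t}(V_{t-1}\cap W_t)=\dim(V_{t-1}+W_t)-(n+t-1)=(\alpha n-t)-G_t$, where $G_t:=\codim_Z(V_{t-1}+W_t)\ge 0$. Finally, I would bound $G_t$ by using $V_{t-1}+W_t\supseteq V_0+W_t$ and passing to the quotient $Z/W_t$: this quotient is naturally the $(\alpha n-1)$-dimensional space of zero-sum vectors on the last $\alpha n$ coordinates, and the image of $V_0$ in it is exactly the row space of the matrix $\widehat X$ obtained from the i.i.d.\ block $X$ by deleting its $t$-th column, so $G_t\le\corank(\widehat X)$ (here $\corank$ is number of columns minus rank).

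Putting these together, each term is at most $\E\big[(1-\beta)^{(\alpha n-t)-\corank(\widehat X)}\big]=(1-\beta)^{\alpha n-t}\,\E\big[(1-\beta)^{-\corank(\widehat X)}\big]$. Now $\widehat X$ is an $n\times(\alpha n-1)$ matrix with i.i.d.\ entries of min-entropy $\beta$, and the known tail bounds for coranks of rectangular i.i.d.\ matrices over $\F_p$ (as in \cite{M2,W1}) show that $\E[\lambda^{\corank(\widehat X)}]$ is bounded by an absolute constant $K$ for every fixed $\lambda\ge 1$, uniformly in $n$, $t$ and $\alpha$; applying this with $\lambda=(1-\beta)^{-1}$ bounds each term by $K(1-\beta)^{\alpha n-t}$. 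Summing over $t$ gives $\sum_{t=1}^{\alpha n-r}K(1-\beta)^{\alpha n-t}\le K\sum_{m=r}^{\infty}(1-\beta)^m=\frac{K}{\beta}(1-\beta)^r=O(\exp(-Cr))$, and adding $\P(\overline{\mathcal G_0})$ finishes the argument.

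The step I expect to be the main obstacle is the uniform constant $K$ in the corank moment bound. When $\alpha<1$ this is essentially free, since then $\corank(\widehat X)=0$ with probability $1-O(\exp(-C(1-\alpha)n))$; but in the balanced case $\alpha=1$ the matrix $\widehat X$ is the near-square $n\times(n-1)$ matrix, whose corank is positive with probability bounded away from zero, so one really needs the sharp super-geometric tail $\P(\corank(\widehat X)=k)\le K p^{-k(k-1)}$ with $K$ independent of $n$, and a crude column-by-column union bound — which costs polynomial-in-$n$ factors — is not good enough. The remaining steps are routine linear algebra inside the space of zero-sum vectors.
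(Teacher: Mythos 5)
Your proof is correct in its overall architecture, and that architecture matches the paper's: condition on $M_n$ having full rank (Lemma~\ref{lemma:first-n-rows}), then expose $Y_1,\dots,Y_{\alpha n-r}$ one row at a time and bound each conditional membership probability with Lemma~\ref{lemma:odlyzko} applied to the genuinely iid coordinates, the point in both arguments being that the rank of the iid block of $M_n$ keeps the relevant codimension of size about $\alpha n-t$. Your bookkeeping inside the zero-sum space --- $\codim_{W_t}(V_{t-1}\cap W_t)=\alpha n-t-G_t$ with $G_t\le\corank(\wh X)$ read off from the quotient $Z/W_t$ --- is correct, and is in fact a more careful justification of the codimension count than the paper's one-line projection inequality. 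The genuine divergence is in how the rank of the iid block enters: the paper conditions once on the event that $M_n^J$ has rank at least $\alpha n-r'$, which by Corollary~\ref{fullrankforrects} (applied to $\alpha n-r'$ of its columns) fails with probability $O(\exp(-Cr'))$, chooses $r'=r/2$ to balance this against the geometric sum $\sum_i q^{\alpha n-r'-i}$, and needs nothing more; you instead keep $\corank(\wh X)$ random and integrate it out, which is precisely what creates the obstacle you flag.

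That moment bound $\E\big[(1-\beta)^{-\corank(\wh X)}\big]\le K$ uniformly in $n$ is true, but it is not something you can simply quote from \cite{M2,W1} in the form you need: those statements give, for each fixed corank value $k$, the limiting probability up to an additive $\exp(-Cn)$ error, which is not a tail bound uniform in $k$ and $n$, and summing $\lambda^k$ against such errors over all $k\le n$ requires a separate large-$k$ argument. If you want to keep your route, prove the tail directly by a sequential column-exposure refinement of Corollary~\ref{fullrankforrects}: summing over the $k$ positions at which the exposed column falls in the span of its predecessors (whose span then has codimension at least $n-j_i+i$ at the $i$-th such position $j_i$) gives $\P(\corank(\wh X)\ge k)\le \frac{1}{k!\,\beta^k}(1-\beta)^{k(k+1)/2}$ with no $\binom{n}{k}$ loss, and the moment bound follows with a constant depending only on $\beta$. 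Alternatively, and more simply, adopt the paper's device of a single conditioning event at level $r'=r/2$, which removes the need for any corank tail estimate.
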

\begin{proof}
Letting $J=\{n+1,\dots,  n+\alpha n\}$, we see $M_n^J$ has iid entries and thus by Corollary \ref{fullrankforrects} has rank $\alpha n-r'$ with probability at least $1-\exp(-Cr')$ for any $r'<\alpha n$. Condition on $M_n^J$ having rank $\alpha n - r'$, $r'$ to be chosen as a function of $r$.

We proceed inductively. By Lemma~\ref{lemma:first-n-rows}, $M_n$ is full rank with probability at least $1-\exp(-Cn)$. Condition on $\rank(M_{n+i-1}) = n+i-1$ for some $1\leq i\leq \alpha n - r$. We will show $\rank(M_{n+i}) = n+i$ with probability $1-O(\exp(-Cr))$.

Let $I = J\setminus {n+i}$ and let $P_{I}$ be the projection onto the coordinates in $I$. As $Y_i$ vanishes on $I$, $Y_i\in W_{n+i-1}$ if and only if it is in the kernel of the $P_I$ restricted to $W_{n+i-1}$ e.g. $Y_i\in \ker(P_I|_{W_{n+i-1}})$. Where $W_{n+i-1}$ is the row space of $M_{n+i-1}$. Using our assumption $M_n^J$ has rank at least $\alpha n - r'$, $M_n^I$ has rank at least $\alpha n - r' - 1$. Using our inductive hypothesis $\dim(W_{n+i-1}) = n+i-1$ and the rank-nullity theorem we have,

\begin{align*}
    \dim P_{[n]}(\ker(P_I|_{W_{n+i-1}})) &\leq \dim\ker(P_I|_{W_{n+i-1}}) \\ &= \dim W_{n+i-1} - \dim P_I W_{n+i-1} \\&\leq n+i-1 - (\alpha n - r' - 1)
\end{align*}

Therefore the codimension of $P_{[n]}(\ker P_I|_{W_{n+i-1}})$ as a subspace of $\F_p^n$ is at least,
\begin{align*}
    n - (n+i-1 - (\alpha n - r' - 1)) = \alpha n - i - r'.
\end{align*}

Because $Y_i$ has iid entries in the first $n$ indices, $Y^n_i$ has min-entropy $\beta = \min(q, 1-q)$. Applying Lemma \ref{lemma:odlyzko},

\begin{align*}
\P(Y_{i}\in W_{n+i-1})\leq \P(Y_{i}^{n}\in P_{[n]}(\ker P_I|_{W_{n+i-1}}))\leq \beta^{\alpha n-r'-i}.
\end{align*}
Summing the loss in probability $\dim(W_{n+i}) = n+i$ at each step,
\[
\P(M_{(1+\alpha)n-r}\text{ is full rank})\geq 1-\sum_{i=1}^{\alpha n-r}\beta^{\alpha n-r'-i}.
\]
Taking $r' = r/2$ we achieve the desired result where
\[
1-\sum_{i=1}^{\alpha n-r}\beta^{\alpha n-\frac{r}{2}-i} = 1-O(\exp(-Cr)).
\]
\end{proof}

\section{Rank evolution}\label{rankevolution} 

Our goal for this section is to show the rank evolution as the final $r$ rows are exposed is what one would predict from the uniform model.

Let $W_{(1+\alpha)n-k}$ be the subspace generated by the first $(1+\alpha)n-k$ rows of the matrix $M_{(1+\alpha)n}$. 

\begin{proposition}[Rank evolution]\label{prop:rank} Let $\delta>0$, and $k\leq \delta n$. There exists an event $\CE_{(1+\alpha)n-k}$ of probability at least $1 -O(\exp(-Cn))$ such that the following holds.
$$\P(Y_{(1+\alpha)n-k+1} \in W_{(1+\alpha)n-k} | \CE_{(1+\alpha)n-k} \wedge W_{(1+\alpha)n-k} \mbox{ has codimension $l$}) = \frac{1}{p^{l-1}} + O(\exp(-Cn/p^2)).$$
\end{proposition}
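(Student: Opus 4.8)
The plan is to reduce the assertion to an anti-concentration estimate for the freshly exposed Laplacian row against the orthogonal complement of $W_{(1+\alpha)n-k}$, with the event $\CE_{(1+\alpha)n-k}$ engineered so that this complement carries no ``structured'' vector. Note first that $\CE_{(1+\alpha)n-k}$ and $\{W_{(1+\alpha)n-k}\text{ has codimension }l\}$ are measurable with respect to the first $(1+\alpha)n-k$ rows, which are probabilistically independent of $Y_{(1+\alpha)n-k+1}$; so after conditioning we face a fresh random Laplacian row against a fixed subspace $W$ of codimension $l$ whose normal space is well controlled. Since every row of $M$ sums to zero, $W_{(1+\alpha)n-k}\subseteq\mathbbm{1}^\perp$ and also $Y_{(1+\alpha)n-k+1}\in\mathbbm{1}^\perp$, so $\mathbbm{1}\in W_{(1+\alpha)n-k}^\perp$. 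Choosing a basis $\mathbbm{1}=v_1,v_2,\dots,v_l$ of $W_{(1+\alpha)n-k}^\perp$, we get $Y_{(1+\alpha)n-k+1}\in W_{(1+\alpha)n-k}$ iff $\langle Y_{(1+\alpha)n-k+1},v_m\rangle=0$ for $m=2,\dots,l$, the $m=1$ constraint being automatic --- this is the source of the drop from exponent $l$ to $l-1$. Writing $c=(1+\alpha)n-k+1$ for the index of the exposed row (and of its diagonal column), one has $\langle Y_{(1+\alpha)n-k+1},v\rangle=\langle y,\phi(v)\rangle$, where $y\in\F_p^n$ collects the $n$ i.i.d.\ Bernoulli$(q)$-type entries of the row and $\phi(v):=(v_i-v_c)_{i=1}^n$; set $U:=\phi(\Sp(v_2,\dots,v_l))$.

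The event $\CE_{(1+\alpha)n-k}$ is the intersection of: (i) the full-rank events of Section~\ref{fullrk} --- in particular $M_n$ and $M_n^{\{n+1,\dots,(1+\alpha)n\}}$ full rank, which forces $\ker\phi\cap W_{(1+\alpha)n-k}^\perp=\Sp(\mathbbm{1})$ (hence $\dim U=l-1$) and $l\le\delta n$; and (ii) the event that no $v\in W_{(1+\alpha)n-k}^\perp$ has $\phi(v)\neq 0$ with $\phi(v)$ \emph{structured}, ``structured'' meaning sparse or of small least common denominator in the sense of Meehan--Luh--Nguyen \cite{LMNg}, transported through $\phi$ to the Laplacian model. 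Part (i) fails with probability $O(\exp(-Cn))$ by Section~\ref{fullrk}. Part (ii) is handled by the \cite{LMNg} scheme: sparse candidates are eliminated by a union bound over their supports via Lemma~\ref{lemma:odlyzko}; moderately structured candidates are eliminated by a union bound over a sub-exponential net of small-$\LCD$ vectors, using that on this event at least $(1+\alpha-\delta)n$ probabilistically independent rows are available, each orthogonal to a fixed nonzero vector with probability bounded away from $1$ (again Lemma~\ref{lemma:odlyzko}, applied to the trimmed part of each row as in Fact~\ref{minentropyforlaplacian}). This gives $\P(\CE_{(1+\alpha)n-k}^c)=O(\exp(-Cn))$.

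On $\CE_{(1+\alpha)n-k}$, conditioned on codimension $l$: $U$ is $(l-1)$-dimensional and every nonzero $u\in U$ is unstructured, so the Laplacian inverse Littlewood--Offord bound, Lemma~\ref{laplacian_concentration} (the Laplacian form of Corollary~4.6 of \cite{LMNg}), gives $\sup_{a\in\F_p}|\P(\langle Y_{(1+\alpha)n-k+1},v\rangle=a)-1/p|\le\exp(-Cn/p^2)$ whenever $\phi(v)\neq 0$; equivalently $|\prod_{i=1}^n\widehat{\mu}(u_i)|\le\exp(-Cn/p^2)$ for the single-entry law $\mu$ and every nonzero $u\in U$. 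Expanding each of the $l-1$ constraints $\langle Y_{(1+\alpha)n-k+1},v_m\rangle=0$ into additive characters of $\F_p$ and using $\langle Y_{(1+\alpha)n-k+1},v\rangle=\langle y,\phi(v)\rangle$ yields the exact identity
\[
\P\big(Y_{(1+\alpha)n-k+1}\in W_{(1+\alpha)n-k}\big)=\frac{1}{p^{l-1}}+\frac{1}{p^{l-1}}\sum_{0\neq u\in U}\ \prod_{i=1}^{n}\widehat{\mu}(u_i).
\]
There are at most $p^{l-1}\le p^{\delta n}$ summands, each of modulus $\le\exp(-Cn/p^2)$, so for $\delta$ small enough (depending on $p,q,\alpha$, which is allowed since only $k\le\delta n$ is needed and $p=o(\sqrt n)$) the error sum is $O(\exp(-Cn/p^2))$, giving the claim.

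The main obstacle is part (ii) of the construction of $\CE_{(1+\alpha)n-k}$: transplanting the structure theorem of \cite{LMNg} to the Laplacian matrix model. Their arguments presume genuinely i.i.d.\ entries, whereas each row of $M$ has a forced diagonal entry and hence zero min-entropy; one must rerun the net construction, the counting bounds, and the ``no null vector of small $\LCD$'' estimate on the trimmed rows and on the reduced vectors $\phi(v)$. This is the ``slight modification for the Laplacian setting'' mentioned in the introduction and is where essentially all the work of Section~\ref{rankevolution} lies; once it is in place, the character-sum computation of the third paragraph is routine.
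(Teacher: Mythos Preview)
Your approach is essentially the paper's: define $\CE_{(1+\alpha)n-k}$ to be the event that every nontrivial normal vector to $W_{(1+\alpha)n-k}$ is ``unstructured,'' then expand the membership probability $\P(Y\in W)$ in additive characters over a basis of $W^\perp\setminus\F_p\mathbbm{1}$ and bound each nontrivial term by the Laplacian anti-concentration estimate. The paper packages these steps as Proposition~\ref{prop:non-sparse} (the event), Proposition~\ref{laplacian_concentration} (anti-concentration), and Lemma~\ref{smallrhogood} (the character sum), and its short proof of Proposition~\ref{prop:rank} just cites them in order.

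Two minor divergences are worth flagging. First, your invocation of a least-common-denominator notion from \cite{LMNg} is superfluous in $\F_p$: the paper's only notion of structure is sparseness of $\Bv-a\mathbbm{1}$ on $[n]$ for every $a\in\F_p$ (Proposition~\ref{prop:non-sparse}), and Proposition~\ref{laplacian_concentration} shows this alone forces $\rho_L(\Bv)\le\exp(-\delta' n/2p^2)$. No net over small-$\LCD$ vectors is needed, so part~(ii) of your event reduces to the non-sparseness event. Second, you carry an explicit factor $p^{l-1}$ in the error sum and then absorb it by taking $\delta$ small; the paper's Lemma~\ref{smallrhogood} instead keeps the $1/p^{d-1}$ prefactor outside the sum over $p^{d-1}-1$ characters and obtains the bound $\le\delta$ uniformly in $l$, so no restriction on $\delta$ is needed at this step. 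Your reduction via the map $\phi(v)=(v_i-v_c)_{i\le n}$ is a clean alternative to the paper's direct definition of $\rho_L$, and your verification that $\ker\phi\cap W^\perp=\F_p\mathbbm{1}$ on the full-rank event is correct (and in fact already implied by non-sparseness).
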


This proposition is established by showing the normal vectors to the row space of the first $(1+\alpha-\delta)n$ rows are not sparse or structured with high probability. That is Proposition~\ref{prop:non-sparse} and Proposition~\ref{laplacian_concentration} respectively. Along with a fact about subspaces with unstructured normal vectors, Lemma~\ref{smallrhogood}, we can bound the transition probabilities.

In the following proposition we show the row space of $M_{(1+\alpha - \delta)n}$ does not have sparse normal vectors with high probability. Even stronger we show their projection to the first $n$ coordinates is not sparse. This is necessary as the last couple of rows only have min-entropy in the first $n$ coordinates.

\begin{proposition}[Non-sparseness of normal vectors]\label{prop:non-sparse} Let $\alpha>\delta>0$ be given. There exists $\delta'>0$, $C>0$ such that with probability at least $1-O(\exp(-Cn))$, any vector $\Bv\in \F_p^{(1+\alpha)n}\setminus \F_p\cdot\mathbbm{1}$ that is orthogonal to $X_1,\dots, X_n,Y_1,\dots, Y_{\alpha n-\delta n}$ and any $a\in \F_p$, we have 
$$|\supp\left(\Bv-a\cdot\mathbbm{1}\right) \cap [n]| \ge \delta' n.$$
\end{proposition}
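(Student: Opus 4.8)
The plan is to reduce the statement to the case $a=0$ using the identity $M\mathbbm{1}=0$, then run a union bound over the (presumed small) support of $\Bv$ on the first $n$ coordinates, and for each fixed choice of that support reduce to a full--column--rank statement for a tall submatrix, which I would attack exactly by the row--exposure technique of Section~\ref{fullrk}. Since the Laplacian has zero row sums, every row $X_i$ and $Y_j$ is orthogonal to $\mathbbm{1}$, so $\Bv$ is orthogonal to $X_1,\dots,X_n,Y_1,\dots,Y_{\alpha n-\delta n}$ iff $\Bv-a\mathbbm{1}$ is, and $\Bv-a\mathbbm{1}\notin\F_p\cdot\mathbbm{1}$ iff $\Bv\notin\F_p\cdot\mathbbm{1}$; hence it suffices to show that with probability $1-O(\exp(-Cn))$ there is no $\Bv\in\F_p^{(1+\alpha)n}\setminus\F_p\cdot\mathbbm{1}$ orthogonal to those rows with $|\supp(\Bv)\cap[n]|<\delta'n$ (note $\Bv\neq0$, as $0\in\F_p\cdot\mathbbm{1}$). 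Fix $S\subseteq[n]$ with $|S|=u<\delta'n$ and set $B=\{n+1,\dots,(1+\alpha)n\}$. If such a $\Bv$ exists with $\supp(\Bv)\cap[n]\subseteq S$, then $\Bv$ vanishes off $S\cup B$, so its restriction to the columns $S\cup B$ is a nonzero kernel vector of the submatrix $N_S$ of $M_{(1+\alpha-\delta)n}$ obtained by keeping only those columns. Thus it is enough to prove that for every such $S$,
\[
\P\big(N_S\text{ fails to have full column rank }u+\alpha n\big)\le\exp(-c_0n)
\]
for some $c_0=c_0(p,q,\alpha,\delta)>0$, and then to choose $\delta'$ small enough that $\sum_{u<\delta'n}\binom{n}{u}\exp(-c_0n)=O(\exp(-Cn))$; this is possible because the binary entropy $H(\delta')\to0$ as $\delta'\to0$.

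For the submatrix bound, observe that $N_S$ has $(1+\alpha-\delta)n$ rows and only $u+\alpha n$ columns, a positive linear surplus $(1-\delta)n-u\ge(1-\delta-\delta')n$ once $\delta'<1-\delta$. Its rows have two shapes: the restriction of $X_i$ is $-\delta_i$ in column $i$ (present only when $i\in S$) together with the i.i.d.\ $\Ber(q)$ block $(x_{i,j})_{j\in B}$; the restriction of $Y_j$ is the i.i.d.\ block $(y_{j,i})_{i\in S}$ together with $-\sum_l y_{j,l}$ in the single column $n+j$. I would reveal the rows in the order $Y_1,\dots,Y_{\alpha n-\delta n}$, then $X_i$ for $i\in S$, then $X_i$ for $i\notin S$, tracking the codimension of their span in $\F_p^{u+\alpha n}$. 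Exactly as in the proof of Lemma~\ref{lemma:first-n-rows}, Lemma~\ref{lemma:zerosum} and a Chernoff bound give that, except with probability $\exp(-cn/p^2)$, all but $O(n/p)$ of the $\delta_i$ and of the $\sum_l y_{j,l}$ are nonzero; the corresponding $Y_j$ rows then contribute the coordinate vectors $e_{n+j}$ ($j\le\alpha n-\delta n$) and the corresponding $X_i$, $i\in S$, rows contribute $e_i$, so after these the codimension is already at most $\delta n+O(n/p)$. It remains to expose the $n-u\ge(1-\delta')n$ rows $X_i$ with $i\notin S$; their restrictions to $B$ are genuine i.i.d.\ $\Ber(q)$ vectors (for $i\notin S$ the entry $\delta_i$ is not among the retained columns, so no conditioning degrades them, and where needed one trims the last $p$ columns and invokes Fact~\ref{minentropyforlaplacian} to retain min-entropy $\min(q,1-q)^p$), so by Lemma~\ref{lemma:odlyzko} each such row lies in the current span with probability at most $(1-\beta)^{k}$, $k$ the current codimension; bounding the exposure sum exactly as in Corollary~\ref{fullrankforrects} and Proposition~\ref{prop:rect-full-rank}, and using that $(1-\delta')n$ exceeds $\delta n+O(n/p)$ for $\delta,\delta'$ small and $p=o(\sqrt n)$, yields the claimed $\exp(-c_0n)$.

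The main obstacle is making this submatrix argument go through uniformly over all $2^{o(n)}$ choices of $S$ while respecting the vanishing single--entry min-entropy of the Laplacian. Concretely one must arrange that the ``pivot'' rows—the $X_i$ with $i\in S$ and the non-degenerate $Y_j$—knock the codimension down to roughly $\delta n$ before any min-entropy is spent, so that the leftover codimension is strictly below the number $(1-\delta')n$ of fresh i.i.d.\ rows still available, and one must route the conditioning on the diagonal entries $\delta_i$ through the column-trimming of Fact~\ref{minentropyforlaplacian}. These two points are exactly what force $\delta'$ (and the ambient $\delta$) to be taken small, and the bookkeeping in the exposure sum is the only genuinely technical part; everything else is a direct adaptation of Section~\ref{fullrk}.
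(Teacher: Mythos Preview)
Your overall strategy matches the paper's: reduce to $a=0$ via $\mathbbm{1}\in R^\perp$, fix a small set $S\subseteq[n]$ with $|S|=u<\delta'n$, show the submatrix $N_S=M_{(1+\alpha-\delta)n}^{S\cup B}$ has full column rank with probability $1-\exp(-c_0n)$, and union bound over $S$. The paper carries out the full--column--rank step by \emph{column} exposure rather than row exposure: it first shows $M^{B}$ has full column rank (by the argument of Lemma~\ref{lemma:first-n-rows}) and that $A_1$ has rank $\ge\alpha n-\eps n$, builds a full--rank minor $M_L^{B}$ using at most $\eps n$ rows from the bottom block, and then exposes the $m$ columns in $S$ one at a time, applying Lemma~\ref{lemma:odlyzko} to the bottom $(\alpha-\delta)n$ entries $(y_{j,i})_j$ of each new column.

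Your row--exposure variant has a genuine gap in the final step. The rows $X_i|_{S\cup B}$ for $i\notin S$ vanish identically on $S$; as vectors in $\F_p^{S\cup B}$ they have min--entropy zero, so Lemma~\ref{lemma:odlyzko} does \emph{not} yield the bound $(1-\beta)^k$ with $k=\codim_{\F_p^{S\cup B}}(V)$. What it gives is $(1-\beta)^{k'}$ with $k'=\codim_{\F_p^B}\bigl(V\cap(\{0\}^S\times\F_p^B)\bigr)$, and $k'=k$ only when the projection $\pi_S(V)=\F_p^S$. Your claim that ``the $X_i$, $i\in S$, rows contribute $e_i$'' does not establish this: $X_i|_{S\cup B}=-\delta_ie_i+(x_{i,j})_{j\in B}$, and since the $Y_j$'s only pin down $(\alpha-\delta)n-O(n/p)$ of the $\alpha n$ coordinates in $B$, one cannot in general reduce $X_i$ modulo the current span to a multiple of $e_i$. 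The fix is short but must be made explicit: the projections $\pi_S(Y_j|_{S\cup B})=(y_{j,i})_{i\in S}$ are $(\alpha-\delta)n$ i.i.d.\ $\Ber(q)$ vectors in $\F_p^{u}$, so by Corollary~\ref{fullrankforrects} they span $\F_p^S$ with probability $1-\exp(-cn)$; once this holds, $k'=k$ and your Odlyzko step goes through. (Also, conditioning on ``most $\delta_i$ nonzero'' is then unnecessary and should be dropped---as written it does taint the entries $x_{i,j}$ you later call ``genuine i.i.d.''.)
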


\begin{proof}
Let $W$ be the rowspace of $M_{(1+\alpha - \delta)n}$. Because $M$ is a Laplacian, for any $a\in \F_p$, we have that $a\cdot\mathbbm{1}\in W^\perp$. Thus, $W^\perp-a\cdot\mathbbm{1} = W^\perp$ which allows us to reduce to the case of $a=0$.

Let $J = \{n+1,\dots, \alpha n\}$ and consider an arbitrary subset $I\subseteq [(1+\alpha)n]$ with $J\subseteq I$ and $|I| = \alpha n+m$. Then, we consider the matrix
\[
M_{(1+\alpha - \delta)n}^I =
\begin{pmatrix}
    D_1 & A_1\\
A_2 & D_2\\
\end{pmatrix}.
\]
Where $A_1,$ $A_2$ are iid of dimension $n\times \alpha n$ and $(\alpha -\delta)n \times m$ respectively. Note that $D_1$, $D_2$ are not necessarily diagonal, but trimmed versions of the diagonal matrices consisting of corresponding row sums from $A_1$, $A_2$.

We prove $M_{(1+\alpha - \delta)n}^I$ is full rank with probability $1-O(\exp(-Cn))$. To do this, we argue that there is a rank $n$ subset of rows comprised primarily from the first $n$ rows (i.e., rows from $A_1,$ $D_1$).

By Lemma \ref{lemma:odlyzko}, $A_1$ has rank at least $\a n-\eps n$ with probability at least $1-O(\exp(-Cn))$ for any constant $\eps>0$. Further trimming the leftmost $m$ columns, consider
\[
M_{(1+\alpha - \delta)n}^{J} =
\begin{pmatrix}
A_1\\
D_2\\
\end{pmatrix}.
\]

By the same argument as Lemma~\ref{lemma:first-n-rows} this matrix is full rank with probability at least $1-O(\exp(-Cn))$. The only difference from Lemma $\ref{lemma:first-n-rows}$ is that the entries in $D_2$ are independent from the entries in $A_1$.

By the union bound, $M_{(1+\alpha -\delta)n}^{J}$ will have full rank and $A_1$ will have rank at least $\alpha n-\eps n$ simultaneously with probability at least $1-O(\exp(-Cn))$. We then take the linearly independent $\alpha n-\eps n$ rows from $[n]$ and supplement via some $\eps n$ rows from $\{n+1,\dots,n-\delta n\}$ to get a rank $\alpha n$ minor of $M_{(1+\alpha - \delta)n}^{J}$. Let $L\subseteq [(1+\alpha)n]$ denote the indices of the rows comprising the full rank minor, $M_{L}^{J}$.

Now we expose the leftmost $m$ columns of $M_{(1+\alpha - \delta)n}^{I}$ (this is similar to Lemma~\ref{prop:rect-full-rank}). Applying Lemma~ \ref{lemma:odlyzko} to the subvector of the last $n-\delta n$ entries of the $i^{th}$ exposed vector it is in the column span of $M_L^J$ and the previously exposed columns with probability at most $\beta^{(\alpha-\delta-\eps)n-i}$, where $\beta=\min(q, 1-q)$. Using,
\begin{align*}
|L\cap \{n+1,\dots,(1+\alpha)n\}|\leq \eps n.
\end{align*}

Exposing all $m$ columns, we see that
\[
\P(M_{(1+\alpha - \delta)n}^{I}\text{ is full rank })\geq  \prod_{i=1}^m\left(1-\beta^{(\alpha-\delta-\eps)n-i}\right).
\]
If $M_{(1+\alpha - \delta)n}^{I}$ is full rank, then any nonzero orthogonal vector to the rows of $M_{(1+\alpha - \delta)n}$ must have support outside the columns of $M_{(1+\alpha - \delta)n}^{I}$. When we look at all possible choices of $I$, if all choices lead to $M_{(1+\alpha - \delta)n}^{I}$ being full rank, then there are at least $m+1$ nonzero entries in a normal vector to $M_{(1+\alpha - \delta)n}$. So, by union bound, we have that 
\[
\P(|\supp(\Bv) \cap[n]|\geq m) \geq 1- \binom{n}{m}\left(1-\prod_{i=1}^m\left(1-\beta^{(\alpha-\delta-\eps)n-i}\right)\right).
\]
Then, setting parameters, we choose $m = \delta'n$ for some $\delta'=\delta'(q)>0$ so that we have 
\[
1-\binom{n}{\delta'n}\left(1-\prod_{i=1}^{\delta' n}\left(1-\beta^{(\alpha-\delta-\eps)n-i}\right)\right)\geq 1-\binom{n}{\delta'n}\left(\delta' n \beta^{(\alpha-\delta-\eps)n-\delta' n}\right)
\]
\[
=1-O(\exp{(-Cn)})
\]
for some constant $C > 0$, where the final equality holds as
\[
\binom{n}{\delta'n}\ll d^n
\]
for $d=d(\delta')$ where we can choose $\delta'$ so that $d$ is arbitrarily close to 1 independent of $n$.
\end{proof}

We introduce the following concentration probability, designed for our purpose,
\[
\rho^j_L(\Bw) = \underset{{a\in\F_p}}{\sup}|\P[X\cdot\Bw = a]-\frac{1}{p}|,
\]
where $X = (x_1,\dots,x_n,0,\dots,-\sum_ix_i,0,\dots,0)$, with the sum at index $n+j$ and the $x_i$ are iid copies of a random variable which is 1 with probability $q$, otherwise 0. It is Laplacian version of $\rho$ from \cite{LMNg} wherein $X$ has exclusively iid entries. As the rows we add in the final $\delta n$ steps have only one nonzero entry outside the first $n$ coordinates, our notion of structure must be sensitive to structure specifically in those coordinates.

Conceptually, we think of vectors with small $\rho^j_L$ as being unstructured.

The following proposition is the Laplacian version of a structure property from \cite{LMNg}, Corollary 4.6 in their paper. The argument is similar to the one in that paper and included here for completeness.

\begin{proposition}\label{laplacian_concentration} Suppose for all $a\in\F_p$, $\Bw-a\cdot \mathbbm{1}$ has at least $m$ non-zero coordinates among the first $n$ coordinates where $p<\sqrt{m}$, then
\[
\rho^j_L(\Bw)\leq \exp{(-m/2p^2)},
\]
for all $j$.
\end{proposition}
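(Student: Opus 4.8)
The plan is to estimate $\rho_L(\Bw)$ via a standard Fourier-analytic (exponential sum) argument, exactly mirroring the proof of the iid version in \cite{LMNg} but keeping track of the fact that the distinguished row $X=(x_1,\dots,x_n,0,\dots,-\sum_i x_i,0,\dots,0)$ has only one nonzero coordinate outside $[n]$, namely the entry $-\sum_i x_i$ placed in some column $j_0 > n$. Write $e_p(t)=\exp(2\pi i t/p)$. For any $a\in\F_p$ we have the identity
\[
\P[X\cdot\Bw = a] - \frac1p = \frac1p\sum_{t=1}^{p-1} e_p(-at)\,\E\, e_p\!\left(t\,(X\cdot\Bw)\right),
\]
so it suffices to bound $|\E\, e_p(t(X\cdot\Bw))|$ uniformly over $t\in\{1,\dots,p-1\}$.

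Next I would expand the inner product. Writing $w_{j_0}$ for the coordinate of $\Bw$ in the distinguished column, we have $X\cdot\Bw = \sum_{i=1}^n x_i(w_i - w_{j_0})$, since the $x_i$ each appear once with coefficient $w_i$ in the first block and collectively contribute $-\sum_i x_i$ times $w_{j_0}$. Hence, using independence of the $x_i$,
\[
\left|\E\, e_p\!\left(t(X\cdot\Bw)\right)\right| = \prod_{i=1}^n \left|\E\, e_p\!\left(t x_i (w_i - w_{j_0})\right)\right|.
\]
Now the vector $\Bw' := \Bw - w_{j_0}\cdot\mathbbm{1}$ is exactly of the form covered by the hypothesis with $a = w_{j_0}$: it has at least $m$ nonzero coordinates. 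Those among its first $n$ coordinates (say there are $m'$ of them in $[n]$; if fewer than $m$ lie outside $[n]$ then at least $m-(\text{few})$ lie inside, but more simply: the support condition is about $\Bw-a\mathbbm{1}$ over all coordinates, so one restricts attention to the first-$n$-coordinate support) give nontrivial factors. Each factor $|\E\, e_p(t x_i (w_i - w_{j_0}))|$ is strictly less than $1$ whenever $t(w_i-w_{j_0})\not\equiv 0$, and a short computation with the two-point (Bernoulli-type) distribution of $\zeta$ — or the more general min-entropy/anticoncentration bound already used in the paper — shows each such factor is at most $1 - c/p^2$ for an absolute $c>0$, uniformly in $t\ne 0$; this is where the $p^2$ in the exponent comes from and where $p<\sqrt{m}$ is used to ensure the product still beats $1$. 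Multiplying over the (at least $m$, after the reduction) nontrivial coordinates gives $|\E\, e_p(t(X\cdot\Bw))|\le (1-c/p^2)^m \le \exp(-cm/p^2)$, and summing the $p-1$ Fourier terms and dividing by $p$ preserves a bound of the shape $\exp(-m/2p^2)$ after adjusting constants.

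There is a subtlety I should address carefully: the hypothesis bounds the support of $\Bw-a\mathbbm{1}$ over \emph{all} $(1+\alpha)n$ coordinates, whereas the Fourier product above only sees the first $n$ coordinates plus the single distinguished column $j_0$. The clean way to handle this is to note that $X\cdot\Bw$ depends on $\Bw$ only through $(w_1,\dots,w_n,w_{j_0})$, equivalently through $\Bw'=\Bw-w_{j_0}\mathbbm{1}$ restricted to those same coordinates; so I should either (i) arrange the statement so that $m$ counts nonzero coordinates \emph{within} the relevant index set — which is consistent with how Proposition~\ref{prop:non-sparse} feeds into this, since that proposition produces $\delta' n$ nonzero coordinates specifically in $[n]$ — or (ii) observe that the relevant anticoncentration only improves when more coordinates in $[n]$ are nonzero, and invoke the hypothesis in the form it will actually be used, namely with the $m$ nonzero coordinates guaranteed to lie in $[n]$. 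The main obstacle is therefore not the Fourier estimate itself, which is routine, but making the bookkeeping of \emph{which} coordinates carry the non-degeneracy fully rigorous and consistent with the downstream application; the per-coordinate bound $1-c/p^2$ and the requirement $p<\sqrt m$ are the only other quantitative points, and both follow from elementary estimates on $|\E\, e_p(t\zeta s)|$ for $s\not\equiv0\pmod p$ that are already implicit in the min-entropy machinery of Section~\ref{fullrk}.
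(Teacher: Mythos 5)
Your proposal follows essentially the same route as the paper's proof: reduce to $\Bw - w_{j_0}\cdot\mathbbm{1}$ using $X\cdot\mathbbm{1}=0$, expand $\P(X\cdot\Bw=r)-1/p$ in additive characters, and bound each nontrivial factor by $1-c/p^2$ (the paper phrases this via $\|t w_j/p\|_{\R/\Z}$ and a cosine inequality, which is the same per-coordinate estimate up to constants). The bookkeeping subtlety you flag --- that the exponential sum only sees the coordinates in $[n]$ (after the shift by $w_{j_0}\mathbbm{1}$) while the hypothesis counts support over all coordinates --- is genuine and is resolved exactly as you suggest, since Proposition~\ref{prop:non-sparse} supplies $\delta'n$ nonzero coordinates of $\Bv-a\cdot\mathbbm{1}$ inside $[n]$.
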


\begin{proof}
Observe that $X\cdot \Bw = X\cdot (\Bw-a\cdot\mathbbm{1})$ for any $\Bw$ and $a\in \F_p$.

Thus, if $X = (x_1,\dots,x_n,0,\dots, -\sum_ix_i,0, \dots,0)$ where $-\sum_ix_i$ is at index $j$, it suffices to consider the case $\Bw_j = 0$ since otherwise we consider $\Bw-(\Bw_j,\dots,\Bw_j)$.

Note that we view $\Bw$ as a member of $\F_p^{(1+\alpha)n}$ with the representation 
\[
\Bw\in[-(p-1)/2,(p-1)/2]^{(1+\alpha)n}.
\]
Since $\frac{t\Bw}{p}\in\R^{(1+\alpha)n}$ has at least $m$ non-zero coordinates among the first $n$ coordinates for any $t\in\F_p$, restricting to the first $n$ coordinates, we must have that $$\Big\|\Big(\frac{t\Bw}{p}\Big)\Big|_n\Big\|_2 \geq \sqrt{m(1/p)^2} = \sqrt{m}/p.$$
Let $e_p(x) = e^{2\pi ix/p}$. Now, we proceed as in \cite{LMNg}. For any $r\in \F_p$, we have
\begin{align*}
\P(X\cdot \Bw=r)-\frac{1}{p} &= \E\left[\frac{1}{p}\sum_{t\in\F_p,t\neq 0}e_p\left(-rt + \sum_{k=1}^nx_kw_kt\right)\right] \\
&=\E\left[\frac{1}{p}\sum_{t\in\F_p,t\neq 0}e_p(-rt)\prod_{k=1}^ne_p(x_kw_kt)\right] \\
&=  \frac{1}{p}\sum_{t\in\F_p,t\neq 0}e_p(-rt)\prod_{k=1}^n\E[e_p(x_kw_kt)]. 
\end{align*}
Thus, we have
\begin{align*}
|\P(X\cdot \Bw=r)-\frac{1}{p}| &\leq \frac{1}{p}\sum_{t\in\F_p,t\neq 0}\left|\prod_{k=1}^n\E[e_p(x_kw_kt)]\right| \\
&= \frac{1}{p}\sum_{t\in\F_p,t\neq 0}\left|\prod_{k=1}^n\cos(2\pi w_kt/p)]\right|\\
&= \frac{1}{p}\sum_{t\in\F_p,t\neq 0}\left|\prod_{k=1}^n\cos(\pi w_kt/p)]\right|.
\end{align*}
Now, we observe the fact that \[
|\cos\frac{2\pi x}{p}|\leq 1-\frac{1}{2}\sin^2\frac{2\pi x}{p}\leq 1-2\|\frac{2x}{p}\|^2_{\R/\Z}.
\]
Where $\|\cdot\|_{\R/\Z}$ represents the distance to the closest integer. Then, we have that
\begin{align*}
\frac{1}{p}\sum_{t\in\F_p,t\neq 0}\left|\prod_{k=1}^n\cos(\pi w_kt/p)]\right| & \leq \frac{1}{p}\sum_{t\in\F_p,t\neq 0}\prod_{k=1}^n\left(1-2\|\frac{tw_k}{p}\|^2_{\R/\Z}\right) \\
&\leq \frac{1}{p}\sum_{t\in\F_p,t\neq 0}\prod_{k=1}^n\left(e^{-2\|\frac{tw_k}{p}\|^2_{\R/\Z}}\right) \\
&=\frac{1}{p}\sum_{t\in\F_p,t\neq 0}e^{-2\sum_{k=1}^n\|\frac{tw_k}{p}\|^2_{\R/\Z}} \\
&\leq \frac{1}{p}\sum_{t\in\F_p,t\neq 0}e^{-2\|(\frac{t\Bw}{p})|_n\|^2_2} \\
&\leq \frac{1}{p}\sum_{t\in\F_p,t\neq 0}e^{-m/2p^2} \\
&\leq e^{-m/2p^2}.
\end{align*}
\end{proof}

The following is a variant of Lemma 7.1 from \cite{LMNg} adapted for the different notion of structure in the Laplacian setting. 

\begin{lemma}\label{smallrhogood}
Let $H$ be a subspace of $\F_p^{(1+\alpha)n}$ of codimension $d$ such that $\mathbf{1}\in H^\perp$ and for any $\Bw\in H^\perp\setminus \F_q\mathbf{1}$, we have $\rho^j_L(\Bw)\leq \delta$. Then, if $X = (x_1,\dots,x_n,0,\dots,-\sum_ix_i,0,\dots,0)$ with the sum at index $n+j$ and the $x_i$ are iid variables equal to $1$ with probability $q$ otherwise $0$,
\[
|\P(X\in H)-\frac{1}{p^{d-1}}|\leq \frac{p}{p-1}\delta.
\]
\end{lemma}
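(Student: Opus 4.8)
The plan is to run a Fourier-inversion argument in the spirit of Lemma~7.1 of \cite{LMNg}, being careful about the degeneracy of $X$ in the $\mathbbm{1}$ direction. Every vector of the form $X=(x_1,\dots,x_n,0,\dots,-\sum_i x_i,0,\dots,0)$ has zero coordinate sum, i.e.\ $X\cdot\mathbbm{1}=0$; and in the situations where the lemma is applied the subspace $H$ is a rowspace of (a submatrix of) the Laplacian, so $\mathbbm{1}\in H^\perp$, hence $\F_p\mathbbm{1}\subseteq H^\perp$ and $|H^\perp|=p^{\codim H}=p^d$. Writing $e_p(x)=e^{2\pi i x/p}$ as before, I would start from the character-sum identity
\[
\P(X\in H)=\E\!\left[\frac{1}{p^d}\sum_{\Bw\in H^\perp}e_p(X\cdot\Bw)\right]=\frac{1}{p^d}\sum_{\Bw\in H^\perp}\E\!\left[e_p(X\cdot\Bw)\right].
\]

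Next I would split off the main term. For $\Bw\in\F_p\mathbbm{1}$ one has $X\cdot\Bw=0$ deterministically, so each of these $p$ summands equals $1$ and together they contribute $p\cdot p^{-d}=p^{1-d}$. For the remaining $\Bw\in H^\perp\setminus\F_p\mathbbm{1}$ I would bundle them into punctured lines through the origin. If $\F_p\Bw_0\subseteq H^\perp$ is a line different from $\F_p\mathbbm{1}$, then $\Bw_0\notin\F_p\mathbbm{1}$ and
\[
\sum_{t\in\F_p\setminus\{0\}}\E\!\left[e_p(t\,X\cdot\Bw_0)\right]=p\,\P(X\cdot\Bw_0=0)-1=p\Big(\P(X\cdot\Bw_0=0)-\tfrac1p\Big),
\]
an expression that depends only on the line and has absolute value at most $p\,\rho_L(\Bw_0)\le p\delta$ by hypothesis (the hypothesis is available precisely because $\Bw_0\notin\F_p\mathbbm{1}$; on $\F_p\mathbbm{1}$ itself $\rho_L$ attains its maximal value $1-1/p$). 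Since $H^\perp$ has $(p^d-1)/(p-1)$ lines through the origin, one of which is $\F_p\mathbbm{1}$, summing the remaining $(p^d-p)/(p-1)$ of them gives
\[
\left|\P(X\in H)-\frac{1}{p^{d-1}}\right|\le\frac{1}{p^d}\cdot\frac{p^d-p}{p-1}\cdot p\delta=\frac{p}{p-1}\,\bigl(1-p^{1-d}\bigr)\,\delta,
\]
which is exactly $\le\delta$ when $d=2$ and is at most $\frac{p}{p-1}\delta<2\delta$ for all $d\ge 2$; the spurious factor $\le p/(p-1)$ is immaterial since $\delta$ is exponentially small in every application and is absorbed into the $O(\exp(-Cn/p^2))$ term there. (One could also skip the line-bundling and bound each summand crudely by $|\E[e_p(X\cdot\Bw)]|\le p\,\rho_L(\Bw)\le p\delta$, which already yields a bound of order $p\delta$, enough for everything the lemma is used for.)

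I do not expect a serious obstacle: once the set-up is fixed the computation is mechanical. The one point that genuinely needs care — and the only real content — is the bookkeeping around the degenerate direction $\mathbbm{1}$: it is why the main term is $p^{1-d}$ rather than $p^{-d}$, it is why $\F_p\mathbbm{1}$ must be excised before the nonconcentration hypothesis can be invoked, and it is why bundling the error terms into lines (rather than estimating them individually) is what brings the constant in front of $\delta$ down to $O(1)$.
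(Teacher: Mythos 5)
Your argument is essentially the paper's: a character-sum expansion of $\P(X\in H)$ over $H^\perp$ with the degenerate direction $\F_p\mathbbm{1}$ split off to produce the main term $p^{1-d}$, the only cosmetic difference being that you sum over all $p^d$ elements of $H^\perp$ and bundle the error terms into lines, while the paper parametrizes $H^\perp$ by a basis $\Bv_1,\dots,\Bv_{d-1},\mathbbm{1}$ and uses $X\cdot\mathbbm{1}=0$ to drop one indicator. The one discrepancy --- your final constant $\tfrac{p}{p-1}$ in front of $\delta$ rather than $1$ --- is harmless in every application, as you note, and your bookkeeping at that step is actually more careful than the paper's (which bounds the nontrivial character sums by $\delta$ ``by assumption'' without tracking the expectation or the count of terms), so nothing essential is lost.
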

\begin{proof}
Let $\Bv_1,\dots,\Bv_{d-1},\mathbf{1}$ be a basis of $H^\perp$. Then we have that for all $t_1,\dots,t_{d-1}$ not all zero and $a$,
\[
\rho^j_L(a\mathbf{1} + \sum_it_i\Bv_i)\leq \delta.
\]
Using the identity,
\[
\mathbbm{1}_{a_1=0,\dots,a_d=0} = p^{-d}\sum_{t_1,\dots,t_d\in \F_p}e_p(t_1a_1+\dots+t_da_d)
\]
we see that,
\begin{align*}
   \mathbbm{1}_{a_1=0,\dots,a_d=0} &= \frac{1}{p-1}\sum_{t\in\F_p, t\neq 0}\mathbbm{1}_{ta_1=0,\dots,ta_d=0} \\
   &=  p^{-d}\sum_{t_1,\dots,t_d\in \F_p}\frac{1}{p-1}\sum_{t\in\F_p, t\neq 0} e_p(t(t_1a_1+\dots+t_da_d))
\end{align*}
Because $X\cdot \mathbf{1} = 0$, we have
\begin{align*}
\P(X\in H) = \P(\wedge_{i=1}^{d-1} X\cdot \Bv_i=0) &= \frac{1}{p^{d-1}}\sum_{t_1,\dots,t_{d-1}\in\F_p}\frac{1}{p-1}\sum_{t\in\F_p,t\neq 0}e_p(t[t_1X\cdot\Bv_1+\dots+t_{d-1}X\cdot\Bv_d]) \\
&=\frac{1}{p^{d-1}}+\frac{1}{p^{d-1}}\sum_{t_i\in\F_p,\text{ not all zero}}\E\Big[\frac{1}{p-1}\sum_{t\in\F_p,t\neq 0}e_p(t(X\cdot \sum_{i}t_i\Bv_i))\Big].
\end{align*}
Observe,
\begin{align*}
\E\Big|\frac{1}{p}\sum_{t\in\F_p, t\neq 0}e_p(t(X\cdot\sum_{i}t_i\Bv_i))\Big| = \Big|\P[X\cdot \sum_{i}t_i\Bv_i = 0]-\frac{1}{p}\Big| \leq \rho_L^j(\sum_{i}t_i\Bv_i).
\end{align*}
By our assumption on $\rho_L^j$,
\[
\E\Big|\frac{1}{p}\sum_{t\in\F_p, t\neq 0}e_p(t(X\cdot\sum_{i}t_i\Bv_i))\Big|\leq \delta.
\]
So, we obtain
\[
\Big|\P(X\in H)-\frac{1}{p^{d-1}}\Big|\leq \frac{p}{p-1}\delta.
\]
\end{proof}

\begin{proof}(of Proposition~\ref{prop:rank})
We have seen from Proposition \ref{prop:non-sparse} that there exists an event $\CE$ with $\P(\CE)\geq 1-O(\exp(-Cn))$ where for all vectors $\Bv\in W_{(1+\alpha)n-k}^\perp$, we have for any $a\in\F_p$,
\begin{align}\label{sparse_constant}
|\supp\left(\Bv-a\cdot\mathbbm{1}\right) \cap [n]| \ge \delta' n.
\end{align}

Proposition \ref{laplacian_concentration} states that $\Bv$ which satisfy equation~\ref{sparse_constant} for all $a$ have,
\[
\rho^j_L(\Bv)\leq \exp(-\delta'n/2p^2), \text{ for all $j$}.
\]
Conditioning on $\CE$, and applying Lemma~\ref{smallrhogood} to $W_{(1+\alpha)n-k}$ and $Y_{(1+\alpha)n-k + 1}$ with $\delta = \exp(-\delta'n/2p^2)$ we obtain,
\begin{align*}
\Big|\P\big(Y_{(1+\alpha)n-k + 1} \in W_{(1+\alpha)n-k} | \CE \wedge W_{(1+\alpha)n-k} \mbox{ has codimension $l$}\big)  - \frac{1}{p^{l-1}}\Big| &\leq \frac{p}{p-1}\exp(-\delta' n/p^2)\\ &= O(\exp(-\delta' n/p^2)).
\end{align*}
Note $\frac{p}{p-1}\leq 2$ so the big-O does not depend on $p$. 
\end{proof}
\section{proof of Theorem \ref{theorem:main}}\label{finalproof}
\begin{proof}
It is known \cite{FG} that for a uniform $(n+1)\times n$ matrix $M_{n}'$ with edge probability $q$, the final corank distribution is given by
\[
\P(\corank(M_n') = 1+k) = \frac{1}{p^{k^2+k}}\frac{\prod_{i=k+2}^{\infty}\left(1-1/p^i\right)}{\prod_{i=1}^k\left(1-1/p^i\right)} + O(q^{-cn}).\
\]
Let $r = \delta n/p^2$ for some constant $\delta>0$ to be fixed later. Applying Proposition \ref{prop:rect-full-rank}, we have that the matrix $M_{(1+\alpha)n-r}$ is full rank with probability at least $1-O(\exp(-Cr))$. Similarly $(M_n')_{n+1-r}$ will be full rank with probability $1-O(\exp(-Cr)$.

With $r$ rows remaining the rank of both models evolve similarly with a (high probability) initial rank of $n-r+1$ for the $(n+1)\times n$ model and $n-r$ for the Laplacian model. In the uniform model if the rank of $(M_n')_\ell$ is $k$ then the rank increases to $k+1$ when we add a row with probability $1-1/p^{n-k}$. In the Laplacian model if the rank of $M_\ell$ for $(1+\alpha-\delta)n\leq \ell < n$ is $k$ then the rank increases to $k+1$ with probability $1-1/p^{n-k-1}+O(\exp(-Cn/p^2))$.

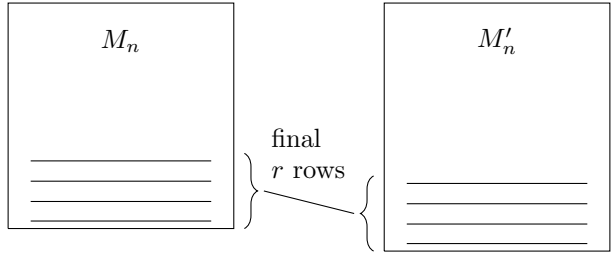
\begin{figure}[h]
	\centering
	\begin{tikzpicture}
        \draw [decorate,decoration={brace,amplitude=5pt},xshift=4pt,yshift=0pt]
        (4,2) -- (4,1) node [black,midway,yshift=0.6cm] {};
        \draw [decorate,decoration={brace,amplitude=5pt},xshift=-4pt,yshift=0pt]
        (6,0.7) -- (6,1.7) node [black,midway,yshift=0.6cm] {};
        \draw (4.4, 1.5) -- (5.6, 1.2);
        \node[text width=1cm] at (5,2) {final\\$r$ rows};

		\draw (1,1) -- (4,1) -- (4,4) -- (1,4) -- (1,1);
		\node at (2.5,3.5) {$M_n$};
		
		\draw (1.3, 1.9) -- (3.7, 1.9);
		\draw (1.3, 1.63) -- (3.7, 1.63);
		\draw (1.3, 1.36) -- (3.7, 1.36);
		\draw (1.3, 1.1) -- (3.7, 1.1);
		
		\draw (6,0.7) -- (9,0.7) -- (9,4) -- (6,4) -- (6,0.7);
		\node at (7.5,3.5) {$M_n'$};
		
		\draw (6.3, 1.6) -- (8.7, 1.6);
		\draw (6.3, 1.33) -- (8.7, 1.33);
		\draw (6.3, 1.06) -- (8.7, 1.06);
		\draw (6.3, 0.8) -- (8.7, 0.8);
	\end{tikzpicture}
	\caption{Laplacian vs. $(n+1)\times n$ iid model}
	\label{fig}
\end{figure}

In other words the two models start in the same place and evolve similarly with high probability. This is illustrated by Figure~\ref{fig}. Summing over the errors for the change of rank over the addition of the final $r$ rows, we achieve a total error
\[
O(\exp(-Cr)) + r\binom{r}{k}O(\exp(-Cn/p^2)).
\]
Now, we take $0<\delta< C$ small enough so that
\[
=O(\exp(-C \delta n/p^2)) + \frac{\delta n}{p^2}\binom{\delta n/p^2}{k}O(\exp(-Cn/p^2)) = O(\exp(-Cn/p^2))
\]
for some constant $C>0$. This yields the final result
\[
\P(\corank (M_{(1+\alpha)n}) = 1+k) = \frac{1}{p^{k^2+k}}\frac{\prod_{i=k+2}^{\infty}\left(1-1/p^i\right)}{\prod_{i=1}^k\left(1-1/p^i\right)} +O(\exp(-Cn/p^2)).
\]
\end{proof}

\section{Conflict of Interest}
On behalf of all authors, the corresponding author states that there is no conflict of interest.
\section{Data Availability Statement}
Data sharing not applicable to this article as no datasets were generated or analysed during the current study.

\begin{thebibliography}{99}
\bibitem{FW} Eduardo Friedman and Lawrence C. Washington, On the distribution of divisor class groups of
curves over a finite field, In Th\'{e}orie des nombres (1989), pp 227–239.
 \vskip .05in
			\bibitem{FG} J. Fulman and L. Goldstein, Stein's  method and the rank distribution of random matrices over finite fields, Ann. Probab. Volume 43, Number 3 (2015), 1274-1314.

 \bibitem{K} S. Koplewitz, Sandpile groups of random bipartite graphs, \url{https://arxiv.org/abs/1705.07519}, 2017.

 \bibitem{LMNg}   K. Luh and S. Meehan and H. Nguyen, Some new results in random matrices over finite fields, submitted. 
 
    

    \bibitem{M1} K. Maples, Singularity of Random Matrices over Finite Fields, \url{arxiv.org/abs/1012.2372}.
    \vskip .05in
    \bibitem{M2} K. Maples, Cokernels of random matrices satisfy the Cohen-Lenstra heuristics, \url{arxiv.org/abs/1301.1239}.
    \vskip .05in
    \bibitem{Mes} A. M\'{e}sz\'{a}ros, The distribution of sandpile groups of random regular graphs, Trans. Amer. Math. Soc. 373 (2020), 6529-6594.
    \vskip .05in

       \bibitem{NgP}  H. Nguyen and E. Paquette, Surjectivity of near square random matrices, to appear in Combinatorics, Probability and Computing.  
           \vskip .05in

    \bibitem{NgW} H. Nguyen and M. M. Wood,  Random integral matrices: universality of surjectivity and the cokernel, submitted. 
    \vskip .05in
    \bibitem{O0} A. M. Odlyzko. On subspaces spanned by random selections of $\pm$1 vectors. J. Combin. Theory Ser. A, 47(1):124-133, 1988.
    \vskip .05in

    \bibitem{W0} M. M. Wood, The distribution of sandpile groups of random graphs, J. Amer. Math. Soc. 30 (2017), pp. 915--958.
    \vskip .05in
    \bibitem{W1}  M. M. Wood, Random integral matrices and the Cohen-Lenstra Heuristics. Amer. J. Math. 141 (2019), pp. 383--398.
    
    
\end{thebibliography}
\end{document}